\documentclass[12pt,reqno]{amsart}
\usepackage{amscd,amssymb,amsmath,tikz}
\begin{document}

\newtheorem{thm}[equation]{Theorem}
\numberwithin{equation}{section}
\newtheorem{cor}[equation]{Corollary}
\newtheorem{expl}[equation]{Example}
\newtheorem{rmk}[equation]{Remark}
\newtheorem{conv}[equation]{Convention}
\newtheorem{claim}[equation]{Claim}
\newtheorem{lem}[equation]{Lemma}
\newtheorem{sublem}[equation]{Sublemma}
\newtheorem{conj}[equation]{Conjecture}
\newtheorem{defin}[equation]{Definition}
\newtheorem{diag}[equation]{Diagram}
\newtheorem{prop}[equation]{Proposition}
\newtheorem{notation}[equation]{Notation}
\newtheorem{tab}[equation]{Table}
\newtheorem{fig}[equation]{Figure}
\newcounter{bean}
\renewcommand{\theequation}{\thesection.\arabic{equation}}

\raggedbottom \voffset=-.7truein \hoffset=0truein \vsize=8truein
\hsize=6truein \textheight=8truein \textwidth=6truein
\baselineskip=18truept

\def\mapright#1{\ \smash{\mathop{\longrightarrow}\limits^{#1}}\ }
\def\mapleft#1{\smash{\mathop{\longleftarrow}\limits^{#1}}}
\def\mapup#1{\Big\uparrow\rlap{$\vcenter {\hbox {$#1$}}$}}
\def\mapdown#1{\Big\downarrow\rlap{$\vcenter {\hbox {$\ssize{#1}$}}$}}
\def\mapne#1{\nearrow\rlap{$\vcenter {\hbox {$#1$}}$}}
\def\mapse#1{\searrow\rlap{$\vcenter {\hbox {$\ssize{#1}$}}$}}
\def\mapr#1{\smash{\mathop{\rightarrow}\limits^{#1}}}
\def\ss{\smallskip}
\def\s{\sigma}
\def\l{\lambda}
\def\vp{v_1^{-1}\pi}
\def\at{{\widetilde\alpha}}

\def\sm{\wedge}
\def\la{\langle}
\def\ra{\rangle}
\def\ev{\text{ev}}
\def\od{\text{od}}
\def\on{\operatorname}
\def\ol#1{\overline{#1}{}}
\def\spin{\on{Spin}}
\def\cat{\on{cat}}
\def\lbar{\ell}
\def\qed{\quad\rule{8pt}{8pt}\bigskip}
\def\ssize{\scriptstyle}
\def\a{\alpha}
\def\bz{{\Bbb Z}}
\def\Rhat{\hat{R}}
\def\im{\on{im}}
\def\ct{\widetilde{C}}
\def\ext{\on{Ext}}
\def\sq{\on{Sq}}
\def\eps{\epsilon}
\def\ar#1{\stackrel {#1}{\rightarrow}}
\def\br{{\bold R}}
\def\bC{{\bold C}}
\def\bA{{\bold A}}
\def\bB{{\bold B}}
\def\bD{{\bold D}}
\def\bC{{\bold C}}
\def\bh{{\bold H}}
\def\bQ{{\bold Q}}
\def\bP{{\bold P}}
\def\bx{{\bold x}}
\def\bo{{\bold{bo}}}
\def\dh{\widehat{d}}
\def\si{\sigma}
\def\Vbar{{\overline V}}
\def\dbar{{\overline d}}
\def\wbar{{\overline w}}
\def\Sum{\sum}
\def\tfrac{\textstyle\frac}

\def\tb{\textstyle\binom}
\def\Si{\Sigma}
\def\w{\wedge}
\def\equ{\begin{equation}}
\def\b{\beta}
\def\G{\Gamma}
\def\L{\Lambda}
\def\g{\gamma}
\def\d{\delta}
\def\k{\kappa}
\def\psit{\widetilde{\Psi}}
\def\tht{\widetilde{\Theta}}
\def\psiu{{\underline{\Psi}}}
\def\thu{{\underline{\Theta}}}
\def\aee{A_{\text{ee}}}
\def\aeo{A_{\text{eo}}}
\def\aoo{A_{\text{oo}}}
\def\aoe{A_{\text{oe}}}
\def\vbar{{\overline v}}
\def\endeq{\end{equation}}
\def\sn{S^{2n+1}}
\def\zp{\bold Z_p}
\def\cR{{\mathcal R}}
\def\P{{\mathcal P}}
\def\cQ{{\mathcal Q}}
\def\cj{{\cal J}}
\def\zt{{\bold Z}_2}
\def\bs{{\bold s}}
\def\bof{{\bold f}}
\def\bq{{\bold Q}}
\def\be{{\bold e}}
\def\Hom{\on{Hom}}
\def\ker{\on{ker}}
\def\kot{\widetilde{KO}}
\def\coker{\on{coker}}
\def\da{\downarrow}
\def\colim{\operatornamewithlimits{colim}}
\def\zphat{\bz_2^\wedge}
\def\io{\iota}
\def\om{\omega}
\def\Prod{\prod}
\def\e{{\cal E}}
\def\zlt{\Z_{(2)}}
\def\exp{\on{exp}}
\def\abar{{\overline a}}
\def\xbar{{\overline x}}
\def\ybar{{\overline y}}
\def\zbar{{\overline z}}
\def\mbar{{\overline m}}
\def\nbar{{\overline n}}
\def\sbar{{\overline s}}
\def\kbar{{\overline k}}
\def\bbar{{\overline b}}
\def\et{{\widetilde E}}
\def\ni{\noindent}
\def\tsum{\textstyle \sum}
\def\coef{\on{coef}}
\def\den{\on{den}}
\def\lcm{\on{l.c.m.}}
\def\vi{v_1^{-1}}
\def\ot{\otimes}
\def\psibar{{\overline\psi}}
\def\thbar{{\overline\theta}}
\def\mhat{{\hat m}}
\def\exc{\on{exc}}
\def\ms{\medskip}
\def\ehat{{\hat e}}
\def\etao{{\eta_{\text{od}}}}
\def\etae{{\eta_{\text{ev}}}}
\def\dirlim{\operatornamewithlimits{dirlim}}
\def\gt{\widetilde{L}}
\def\lt{\widetilde{\lambda}}
\def\st{\widetilde{s}}
\def\ft{\widetilde{f}}
\def\sgd{\on{sgd}}
\def\lfl{\lfloor}
\def\rfl{\rfloor}
\def\ord{\on{ord}}
\def\gd{{\on{gd}}}
\def\rk{{{\on{rk}}_2}}
\def\nbar{{\overline{n}}}
\def\MC{\on{MC}}
\def\lg{{\on{lg}}}
\def\cH{\mathcal{H}}
\def\cS{\mathcal{S}}
\def\cP{\mathcal{P}}
\def\N{{\Bbb N}}
\def\Z{{\Bbb Z}}
\def\Q{{\Bbb Q}}
\def\R{{\Bbb R}}
\def\C{{\Bbb C}}

\def\mo{\on{mod}}
\def\xt{\times}
\def\notimm{\not\subseteq}
\def\Remark{\noindent{\it  Remark}}
\def\kut{\widetilde{KU}}

\def\*#1{\mathbf{#1}}
\def\0{$\*0$}
\def\1{$\*1$}
\def\22{$(\*2,\*2)$}
\def\33{$(\*3,\*3)$}
\def\ss{\smallskip}
\def\ssum{\sum\limits}
\def\dsum{\displaystyle\sum}
\def\la{\langle}
\def\ra{\rangle}
\def\on{\operatorname}
\def\proj{\on{proj}}
\def\od{\text{od}}
\def\ev{\text{ev}}
\def\o{\on{o}}
\def\U{\on{U}}
\def\lg{\on{lg}}
\def\a{\alpha}
\def\bz{{\Bbb Z}}
\def\eps{\varepsilon}
\def\bc{{\bold C}}
\def\bN{{\bold N}}
\def\bB{{\bold B}}
\def\bW{{\bold W}}
\def\nut{\widetilde{\nu}}
\def\tfrac{\textstyle\frac}
\def\b{\beta}
\def\G{\Gamma}
\def\g{\gamma}
\def\zt{{\Bbb Z}_2}
\def\zth{{\bold Z}_2^\wedge}
\def\bs{{\bold s}}
\def\bx{{\bold x}}
\def\bof{{\bold f}}
\def\bq{{\bold Q}}
\def\be{{\bold e}}
\def\lline{\rule{.6in}{.6pt}}
\def\xb{{\overline x}}
\def\xbar{{\overline x}}
\def\ybar{{\overline y}}
\def\zbar{{\overline z}}
\def\ebar{{\overline \be}}
\def\nbar{{\overline n}}
\def\ubar{{\overline u}}
\def\bbar{{\overline b}}
\def\et{{\widetilde e}}
\def\lf{\lfloor}
\def\rf{\rfloor}
\def\ni{\noindent}
\def\ms{\medskip}
\def\Dhat{{\widehat D}}
\def\what{{\widehat w}}
\def\Yhat{{\widehat Y}}
\def\abar{{\overline{a}}}
\def\minp{\min\nolimits'}
\def\sb{{$\ssize\bullet$}}
\def\mul{\on{mul}}
\def\N{{\Bbb N}}
\def\Z{{\Bbb Z}}
\def\S{\Sigma}
\def\Q{{\Bbb Q}}
\def\R{{\Bbb R}}
\def\C{{\Bbb C}}
\def\Xb{\overline{X}}
\def\eb{\overline{e}}
\def\notint{\cancel\cap}
\def\cS{\mathcal S}
\def\cR{\mathcal R}
\def\el{\ell}
\def\TC{\on{TC}}
\def\GC{\on{GC}}
\def\wgt{\on{wgt}}
\def\wpt{\widetilde{p_2}}
\def\wbar{\overline w}
\def\dstyle{\displaystyle}
\def\Sq{\on{sq}}
\def\Om{\Omega}
\def\ds{\dstyle}
\def\tz{tikzpicture}
\def\zcl{\on{zcl}}
\def\bd{\bold{d}}
\def\io{\iota}
\def\Vb#1{{\overline{V_{#1}}}}

\title
{Stiefel-Whitney classes and immersions of orientable and Spin manifolds}
\author{Donald M. Davis}
\address{Department of Mathematics, Lehigh University\\Bethlehem, PA 18015, USA}
\email{dmd1@lehigh.edu}
\author{W. Stephen Wilson}
\address{Department of Mathematics, Johns Hopkins University\\Baltimore, MD 01220, USA}
\email{wwilson3@jhu.edu}
\date{March 21, 2021}

\keywords{immersion, Stiefel-Whitney class, Spin manifolds}
\thanks {2000 {\it Mathematics Subject Classification}: 57R42, 57R20, 55N22.}

\maketitle

\begin{abstract} We determine a nice simple formula for the largest Euclidean space for which there is an orientable $n$-manifold with a nonimmersion detected by Stiefel-Whitney classes. For Spin manifolds, we prove the analogue of the upper bound and  establish the complete answer for $n\le23$ and $32\le n\le33$. Results similar to many of these were obtained some 50 years ago, but in a much less tractable form. The sharp results for Spin manifolds require detailed calculations of $ko$-homology groups of mod-2 Eilenberg MacLane spaces.
\end{abstract}

\section{Introduction}\label{intro}
This work was motivated by a question asked by Mike Hopkins after Ralph Cohen's talk (\cite{Cohtalk}) on immersions of manifolds at a distinguished Harvard lecture series. Cohen had discussed aspects of his proof (\cite{Coh}) that every $n$-manifold can be immersed in $\R^{2n-\a(n)}$, where $\a(n)$ denotes the number of 1's in the binary expansion of $n$. Hopkins asked whether there were similar results for other classes of manifolds, such as orientable or Spin  manifolds.
Work was done on this question long ago for orientable manifolds in \cite{MP}, \cite{BP}, and  \cite{P}, and for Spin manifolds in \cite{R} and \cite{W}. We  extend their results and reinterpret  in a much more tractable form, with a self-contained proof.

By ``manifold'' we always mean a compact connected smooth manifold without boundary.
Let $\wbar_i$ denote the $i$th Stiefel-Whitney class of the stable normal bundle of a manifold. A standard result says that if an $n$-manifold $M$ immerses in $\R^{n+c}$, then $\wbar_i(M)=0$ for $i>c$. We say that a nonimmersion of an $n$-manifold in $\R^{n+c}$ is detected by Stiefel-Whitney classes if $\wbar_{i}(M)\ne0$ for some $i>c$.

Our user-friendly reinterpretation of \cite[Theorem 1]{P}
 is as follows.
\begin{thm}\label{or} Let $\eps_n=0$ if $n\equiv1$ mod 4, and otherwise $\eps_n=1$. There exists a nonimmersion of an orientable $n$-manifold in $\R^{2n-k-1}$ detected by Stiefel-Whitney classes if and only if $k\ge\a(n)+\eps_n$.\end{thm}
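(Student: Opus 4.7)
The plan is to recast Theorem~\ref{or} as the numerical identity
\[
F(n):=\max\{\,i : \text{some orientable $n$-manifold $M$ has }\wbar_i(M)\neq 0\,\}\;=\;n-\alpha(n)-\eps_n.
\]
The definition of a nonimmersion ``detected by Stiefel--Whitney classes'' makes this equivalent to the original statement, since such a nonimmersion in $\R^{2n-k-1}$ exists iff some orientable $n$-manifold has $\wbar_i\neq 0$ for some $i>n-k-1$, i.e., iff $F(n)\ge n-k$. Since the theorem is advertised as a reinterpretation of \cite[Theorem 1]{P}, the efficient route is to read off Peterson's description of the top nonvanishing $\wbar_i$ and verify the identification $n-F(n)=\alpha(n)+\eps_n$ by splitting on $n$ modulo $4$: when $n\equiv 1\pmod 4$, $\eps_n=0$ and Peterson's expression should collapse to the generic Massey bound $n-\alpha(n)$, while for the other residues an additional drop of one should appear.

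Were I instead to prove the identity from scratch, I would handle the upper and lower bounds separately. For $F(n)\le n-\alpha(n)-\eps_n$, I would exploit the dual Wu identity $\wbar=\on{Sq}(\vbar)$, where the dual Wu classes $\vbar_i$ vanish for $2i>n$ by Poincar\'e duality, and, crucially in the orientable case, $\vbar_1=\wbar_1=0$. Each $\wbar_j$ is thus a sum of terms $\on{Sq}^{j-i}(\vbar_i)$ with $2\le i\le n/2$. Expanding via Adem relations, a composite Steenrod operation applied to a class of degree $i$ vanishes in degree exceeding $n$ unless its excess is compatible with the binary expansion of $n$; this is the standard mechanism producing the Massey bound $j\le n-\alpha(n)$. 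Removing the $i=1$ summand should then sharpen the bound by one precisely when the extremal unoriented contribution had required $i=1$, which should be exactly the residue classes $n\not\equiv 1\pmod 4$, accounting for $\eps_n$.

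For the lower bound $F(n)\ge n-\alpha(n)-\eps_n$, I would construct explicit orientable manifolds realizing the top nonzero $\wbar$. Natural building blocks are $\C P^m$, odd-dimensional real projective spaces $\R P^{2a+1}$, and Dold manifolds $P(m,\ell)=(S^m\times\C P^\ell)/\Z_2$, which are orientable for appropriate parameters and have tractable Stiefel--Whitney classes. Products of these, assembled so that the factor dimensions sum to $n$ and the binary digits align with those of $n$, should yield manifolds whose top $\wbar$ is a K\"unneth product of nonvanishing classes. The subtle case is $n\equiv 1\pmod 4$, where the construction must match $\R P^n$'s top degree $n-\alpha(n)$ while remaining orientable, so the example must combine $\R P^n$-like behavior with a parity adjustment.

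The principal obstacle is the Steenrod-algebraic accounting in the upper bound: cleanly quantifying how the single extra constraint $\vbar_1=0$ reduces the attainable top degree of $\wbar$, and showing that this reduction is precisely the three-of-four residue step function $\eps_n$. The fact that the answer depends on $n$ only through $\alpha(n)$ and the $n\bmod 4$ dichotomy---rather than through a more elaborate function---is the feature whose explanation requires the most care.
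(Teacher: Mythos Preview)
Your proposal is a strategy outline rather than a proof, and you explicitly flag the central difficulty as unresolved. More to the point, your route is fundamentally different from the paper's, which sidesteps precisely the obstacle you identify.

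The paper does not work with Wu classes or build explicit manifolds. Instead, Theorem~\ref{genlthm}(a) translates the problem to the universal example: a detected nonimmersion exists iff a mod-$2$ homology class dual to $\chi\sq^{n-k}\io_k\in H^n(K(\zt,k);\zt)$ lifts to integral homology, and a necessary condition is $\chi\sq^{n-k}\io_k\notin\im(\sq^1)$. The ``only if'' direction is then Theorem~\ref{chithm}(i): a Milnor-basis excess argument determines the minimal $k$ with $\chi\sq^{n-k}\io_k\notin\im(\sq^1)$, and one checks this equals $\a(n)+\eps_n$. The ``if'' direction (Section~\ref{ifsec}) is not by constructing manifolds but by exhibiting the integral lift directly. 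For $n\not\equiv0\pmod4$ one shows $\sq^1\chi\sq^{n-k}\io_k\ne0$, so the dual class is in $\im(\rho\circ\partial)$. For $n\equiv0\pmod4$, the class is a $2^{e-1}$st power of an even-dimensional primitive $x$ with $\sq^1x\ne0$, and Browder's Bockstein spectral sequence results \cite{Br} (or \cite{Cl}) show that the dual of such a power lifts to an integral class of order~$2^e$.

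Your approach---Wu-class vanishing for the upper bound, products of projective and Dold manifolds for the lower---is closer in spirit to the original papers \cite{MP}, \cite{BP}, \cite{P} that the present article is reinterpreting. It can in principle be made to work, but the Steenrod bookkeeping you worry about is real: isolating exactly the $\eps_n$ correction from the single relation $\vbar_1=0$ is delicate, and your sketch does not carry it out. The paper's advantage is that the $\im(\sq^1)$ criterion is a single clean condition in the universal space, the Milnor basis makes the excess calculus mechanical, and the Bockstein argument for the lift avoids any manifold hunting. This template also scales to the Spin case (replacing $\im(\sq^1)$ by $\im(\sq^1,\sq^2)$ and integral homology by $ko$-homology), where explicit constructions would be far harder to produce.
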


Thus for $n\equiv1$ mod 4, the restriction of Cohen's result to orientable manifolds is optimal, while for $n\not\equiv1$ mod 4, the best that one might hope for is that all orientable $n$-manifolds can be immersed in $\R^{2n-\a(n)-1}$.

The situation for Spin manifolds is similar, but more complicated, and is not completely resolved. The reduction of the problem to algebraic topology for both orientable and Spin manifolds is given in the following result, whose proof appears at the end of this section. Here $\chi$ is the canonical antiautomorphism of the mod 2 Steenrod algebra, $\iota_k$ is the fundamental class in the mod-2 cohomology of the Eilenberg MacLane space $K(\zt,k)$, and $ko_*(-)$ is connective $KO$ homology, localized at 2.
\begin{thm}\label{genlthm} \begin{itemize}

\mbox{}
\item[a.] Let $\rho:H_*(X;\Z)\to H_*(X;\zt)$ be induced by reduction mod $2$. There exists  an orientable  $n$-dimensional manifold with a nonimmersion in $\R^{2n-k-1}$ implied by Stiefel-Whitney classes if and only if there exists an element $\a\in H_n(K(\zt,k);\Z)$ such that $\la\chi\sq^{n-k}\io_k,\rho(\a)\ra\ne0$. Moreover, it is necessary that $\chi\sq^{n-k}\io_k\not\in\im(\sq^1)$.
\item[b.] Let $h:ko_*(X)\to H_*(X;\zt)$ denote the Hurewicz homomorphism. There exists  an $n$-dimensional Spin manifold with a nonimmersion in $\R^{2n-k-1}$ implied by Stiefel-Whitney classes if and only if there exists an element $\a\in ko_n(K(\zt,k))$ such that $\la\chi\sq^{n-k}\io_k,h_*\a\ra\ne0$. Moreover, it is necessary that $\chi\sq^{n-k}\io_k\not\in\im(\sq^1,\sq^2)$.
    \end{itemize}\end{thm}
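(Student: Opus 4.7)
The plan is to translate each side of the biconditional into a statement about a Kronecker pairing on the manifold, by means of a normal-bundle version of Wu's formula, and then match up the two sides via a Thom-style realization of homology classes by manifolds.

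I would first establish, for any closed $n$-manifold $M$, the identity
\[
\la \chi(\sq^{j})(x),[M]\ra \;=\; \la \wbar_{j}(M)\cdot x,[M]\ra \qquad (x\in H^{n-j}(M;\zt)).
\]
This follows from the classical Wu formula $\sq^{j}x=v_{j}x$ (for $x\in H^{n-j}$), combined with the identity $V=\chi(\sq)(W)$ and the multiplicative relation $W\cdot\wbar=1$ between tangent and normal total Stiefel--Whitney classes; a short symbolic manipulation identifies $\wbar_{j}$ with the ``normal Wu class'' dual to $\chi(\sq^{j})$ on the fundamental class.  Specialising to $j=n-k$ and $x=f^{*}\io_{k}$ for a map $f\colon M\to K(\zt,k)$, the identity becomes
\[
\la \chi\sq^{n-k}\io_{k},\,f_{*}[M]\ra \;=\; \la \wbar_{n-k}(M)\cdot f^{*}\io_{k},\,[M]\ra,
\]
so the pairing in the theorem detects precisely nonvanishing of $\wbar_{n-k}(M)$ against a class coming from $K(\zt,k)$.

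For the forward direction of (a), a SW-detected nonimmersion in $\R^{2n-k-1}$ amounts, after a standard reduction shifting the obstruction into the optimal degree $n-k$, to $\wbar_{n-k}(M)\ne 0$ on some orientable $M$.  Poincar\'e duality supplies $x\in H^{k}(M;\zt)$ with $\la\wbar_{n-k}(M)\cdot x,[M]\ra\ne 0$; representing $x=f^{*}\io_{k}$ and setting $\a=f_{*}[M]\in H_{n}(K(\zt,k);\Z)$ via the integral fundamental class supplied by the orientation, the identity above produces $\la\chi\sq^{n-k}\io_{k},\rho\a\ra\ne 0$.  For the converse, given such an $\a$, I would realize $\rho\a$ as the pushforward of $[M]_{\zt}$ for some orientable $n$-manifold $M$, using that the edge map $MSO_{n}(K(\zt,k))\to H_{n}(K(\zt,k);\Z)$ is onto modulo odd torsion (which $\rho$ kills); the identity then forces $\wbar_{n-k}(M)\ne 0$, and the required nonimmersion follows.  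Part (b) proceeds along exactly the same lines with $ko_{n}$ replacing $H_{n}(-;\Z)$: the Spin structure yields a $ko$-fundamental class via the Atiyah--Bott--Shapiro orientation, and the backward realization uses the split surjection $MSpin_{n}(K(\zt,k))\twoheadrightarrow ko_{n}(K(\zt,k))$ coming from the Anderson--Brown--Peterson $2$-local splitting of $MSpin$.

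The ``moreover'' clauses are formal consequences.  In (a), the image of $\rho\colon H_{n}(X;\Z)\to H_{n}(X;\zt)$ lies in the kernel of the mod-$2$ Bockstein $\sq^{1}_{*}$, so its annihilator in cohomology contains $\im(\sq^{1})$; a class in $\im(\sq^{1})$ therefore pairs trivially with every $\rho\a$, forcing $\chi\sq^{n-k}\io_{k}\notin\im(\sq^{1})$.  In (b), since $H^{*}(ko;\zt)\cong\mathcal{A}/\!/\mathcal{A}(1)$ vanishes in degrees $1$ and $2$, there are no nonzero spectrum maps $ko\to\Sigma^{i}H\zt$ for $i=1,2$, so the Hurewicz image lies in $\ker(\sq^{1}_{*})\cap\ker(\sq^{2}_{*})$, whose annihilator in cohomology is $\im(\sq^{1})+\im(\sq^{2})$.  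The most delicate part of the plan is the Spin realization, which genuinely depends on the Anderson--Brown--Peterson decomposition of $MSpin_{(2)}$; the remaining ingredients are either formal or reduce to computations in the Steenrod algebra.
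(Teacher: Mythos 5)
Your proposal is correct and follows essentially the same route as the paper: the Massey--Peterson identity $\wbar_{n-k}x=\chi\sq^{n-k}x$ evaluated on the fundamental class, Poincar\'e duality to produce $x$, representability of the relevant homology classes by oriented bordism (onto $H_n(-;\Z)$ up to odd torsion) for (a), and the Anderson--Brown--Peterson $2$-local splitting of $MSpin$ to pass back and forth between $MSpin_n(K(\zt,k))$ and $ko_n(K(\zt,k))$ for (b); your implicit normalization of the detection degree to $n-k$ is the same one the paper makes when it assumes $\wbar_{n-k}\ne0$. The only genuine divergence is in the ``moreover'' clauses: you deduce them formally from the main equivalence, using that $\sq^1_*\circ\rho=0$ for (a) and that $H^i(ko;\zt)=0$ for $i=1,2$ (so the Hurewicz image is annihilated by the dual operations) for (b), whereas the paper argues directly on the manifold, noting that the top mod-$2$ class of an orientable (resp.\ Spin) manifold is not in $\im(\sq^1)$ (resp.\ $\im(\sq^1,\sq^2)$) by Wu-class vanishing and pulling this back along the classifying map; both arguments are sound, yours making the necessity a corollary of the equivalence, the paper's being independent of it. One small caution: your phrase ``whose annihilator in cohomology is $\im(\sq^1)+\im(\sq^2)$'' claims more than is needed or evident --- only the easy inclusion, that elements of $\im(\sq^1)+\im(\sq^2)$ pair to zero against classes in $\ker(\sq^1_*)\cap\ker(\sq^2_*)$, is required, and that is all your argument actually uses.
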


In Section \ref{pfsec}, we prove the following theorem, which resolves completely the necessary conditions of Theorem \ref{genlthm}.

\begin{minipage}{6.5in}

\begin{thm}\label{chithm}

\begin{itemize}
\mbox{}

\item[i.] The smallest $k$ such that $\chi\sq^{n-k}\io_k\not\in\im(\sq^1)\subset H^n(K(\zt,k);\zt)$ is
$$\begin{cases}({\bf a})\ \a(m)+b&n=4m+b,\ 1\le b\le3\\
({\bf b})\ \a(n)+1&n\equiv0\pmod4.\end{cases}$$
\item[ii.] The smallest $k$ such that $\chi\sq^{n-k}\io_k\not\in\im(\sq^1,\sq^2)\subset H^n(K(\zt,k);\zt)$ is
$$\begin{cases}({\bf c})\ \a(m)+b&n=8m+b,\ 1\le b\le7\\
({\bf d})\ \a(n)+1&n\equiv2^e\pmod{2^{e+2}}, \ e\ge3\\
({\bf e})\ \a(n)+2&n\equiv3\cdot2^e\pmod{2^{e+2}},\ e\ge3.\end{cases}$$
\end{itemize}
\end{thm}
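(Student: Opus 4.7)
Our approach is to expand $\chi\sq^{n-k}\io_k\in H^n(K(\zt,k);\zt)$ in the polynomial basis and directly check membership in $\im(\sq^1)$ (for (i)) or $\im(\sq^1,\sq^2)$ (for (ii)). By Serre's theorem, $H^*(K(\zt,k);\zt)$ is polynomial on admissible monomials $\sq^I\io_k$ with $\exc I<k$, while $\sq^I\io_k$ is a square of a generator when $\exc I=k$ and vanishes when $\exc I>k$. The argument is organized case-by-case according to the 2-adic structure of $n$, matching (a)--(e).

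The main computational tool is the antipode recursion
\begin{equation*}
\chi\sq^m \;=\; \sum_{i=1}^{m}\sq^i\,\chi\sq^{m-i},
\end{equation*}
obtained from $\sum_{i=0}^{m}\chi\sq^i\sq^{m-i}=0$ for $m>0$, together with the Cartan-like product formula $\chi\sq^m(xy)=\sum_{p+q=m}\chi\sq^p(x)\cdot\chi\sq^q(y)$ (from dualizing the coproduct, which $\chi$ reverses) and the instability relations $\sq^j\io_k=0$ for $j>k$, $\sq^k\io_k=\io_k^2$. Iteratively unwinding these expresses $\chi\sq^{n-k}\io_k$ as a polynomial in the admissible generators $\sq^J\io_k$, reducing the problem to a finite linear-algebraic check in a specific degree.

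To establish $\chi\sq^{n-k}\io_k\not\in\im(\sq^1)$ (respectively $\not\in\im(\sq^1,\sq^2)$) for the claimed $k$, one of two complementary methods will be used in each case: (A) a direct computation that $\sq^1(\chi\sq^{n-k}\io_k)=\chi(\sq^{n-k}\sq^1)\io_k\neq 0$, which already places the class outside $\ker(\sq^1)\supset\im(\sq^1)$; or (B) an explicit basis comparison, writing out $\chi\sq^{n-k}\io_k$ in the polynomial basis and confronting it with a spanning set for $\im(\sq^1)\cap H^n$ obtained by applying $\sq^1$ (as a derivation) to the monomial basis of $H^{n-1}$. For part (ii) the same strategy applies, augmented by the contribution $\sq^2 H^{n-2}$ computed via Adem expansions of $\sq^2$ on admissibles.

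Conversely, to show $\chi\sq^{n-k'}\io_{k'}$ lies in the relevant image for every $k'<k$, we produce for each such $k'$ an explicit $y$ (and, in part (ii), a $z$) realizing $\chi\sq^{n-k'}\io_{k'}=\sq^1 y$ (respectively $\sq^1 y+\sq^2 z$), using Adem relations such as $\sq^1\sq^{2a}=\sq^{2a+1}$ and the polynomial structure at $k'$. The principal obstacle is the case analysis of part (ii), particularly the distinction between (d) and (e): whether $\sq^2 H^{n-2}$ supplies the missing elements already at $k=\a(n)+1$, or whether a further step to $k=\a(n)+2$ is needed, is controlled by how the Adem expansion of $\sq^2$ interacts with the leading admissibles, and depends delicately on the two binary digits of $n$ immediately above $\nu_2(n)$.
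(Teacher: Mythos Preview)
This is a proof outline, not a proof: every substantive assertion is deferred to an unspecified ``direct computation'' or ``explicit basis comparison,'' with nothing carried out and no mechanism given that is uniform in $n$. The gap is real, because the approach as described will not close.

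Concretely, your method (A)---checking $\sq^1\chi\sq^{n-k}\io_k\ne0$---works only for parts (a) and (c). In (b), (d), (e) the critical class reduces, modulo terms already in the image, to a $2^{e-1}$th power of a generator with $e\ge2$, on which $\sq^1$ (and, in (d) and (e), also $\sq^2$) vanishes; so you are forced into method (B) precisely in the hard cases. But method (B) as stated---enumerating $\sq^1 H^{n-1}$ and $\sq^2 H^{n-2}$ in the admissible polynomial basis and comparing---offers no argument uniform in $n$. The paper instead works in the Milnor basis, where $\chi\sq^d=\sum_{|\sq(R)|=d}\sq(R)$ and where $\sq(R)\notin\im(\sq^1,\sq^2)$ iff $4\mid r_1$ and $2\mid r_2$; combined with the excess description of $H^*(K(\zt,k))$, this collapses (a) and (c) to a short integer-inequality argument and reduces (b) to identifying a single monomial that can never appear in any $\sq^1(M)$. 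For (d) and (e) even this is not enough: the paper constructs an explicit linear functional $\phi:H^n\to\zt$ supported on six to ten specified monomials, verifies by hand that $\phi$ annihilates $\im(\sq^1,\sq^2)$, and evaluates it on the target class. Your final sentence correctly flags the (d)/(e) dichotomy as the crux, but nothing in the outline indicates how to resolve it.
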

\end{minipage}

\bigskip

Immediate corollaries of Theorems \ref{genlthm} and \ref{chithm} are the ``only if'' part of Theorem \ref{or} and the following result.  One easily checks the equivalence of the ``$\a(m)+b$'' and ``$\a(n)+\eps'$'' versions.

\begin{cor}\label{spin} Define $\eps'_n$ by
$$\eps'_n=\begin{cases}0&n\equiv1\ (8)\\
1&n\equiv2,3\ (8)\\
3&n\equiv4,5\ (8)\\
4&n\equiv6,7\ (8)\\
1&n\equiv2^e \pmod{2^{e+2}},\ e\ge3\\
2&n\equiv3\cdot2^e\pmod{2^{e+2}},\ e\ge3.\end{cases}$$
If there exists an $n$-dimensional Spin manifold for which a nonimmersion in $\R^{2n-k-1}$ is detected by Stiefel-Whitney classes, then $k\ge\a(n)+\eps'_n$.
\end{cor}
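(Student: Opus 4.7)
The plan is to deduce Corollary \ref{spin} directly by combining Theorem \ref{genlthm}(b) with Theorem \ref{chithm}(ii), so no new mathematical input is required.

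First I would invoke Theorem \ref{genlthm}(b): the existence of an $n$-dimensional Spin manifold with a nonimmersion in $\R^{2n-k-1}$ detected by Stiefel-Whitney classes forces $\chi\sq^{n-k}\io_k\notin\im(\sq^1,\sq^2)\subset H^n(K(\zt,k);\zt)$. Next, Theorem \ref{chithm}(ii) identifies, for each $n$, the smallest $k$ for which this nonvanishing condition can hold; call it $k_0(n)$. The necessary condition then reads $k\ge k_0(n)$, which is exactly the conclusion of the corollary once one checks $k_0(n)=\a(n)+\eps'_n$ in every case.

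The only thing left is the routine translation of the "$\a(m)+b$" form of part (c) of Theorem \ref{chithm} into the "$\a(n)+\eps'_n$" form used in the corollary. For $n=8m+b$ with $0\le b\le 7$ we have $\a(n)=\a(m)+\a(b)$, so $\a(m)+b=\a(n)+(b-\a(b))$, and I would simply tabulate:
\[
b=1\!:\!0,\quad b=2\!:\!1,\quad b=3\!:\!1,\quad b=4\!:\!3,\quad b=5\!:\!3,\quad b=6\!:\!4,\quad b=7\!:\!4,
\]
matching the first four lines of the definition of $\eps'_n$. Cases (d) and (e) of Theorem \ref{chithm} are already expressed as $\a(n)+1$ and $\a(n)+2$, which agree on the nose with the last two lines of $\eps'_n$, so no further bookkeeping is needed.

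There is no substantive obstacle in this proof: all of the real work has been pushed into Theorems \ref{genlthm} and \ref{chithm}, and the corollary is essentially a packaging statement. The only minor care required is to verify, as above, that the seven residues modulo $8$ collapse correctly under the $b-\a(b)$ formula, and to observe that for $n\equiv 2^e$ or $3\cdot 2^e$ mod $2^{e+2}$ with $e\ge 3$ (in particular $n$ even and not $\equiv 0\pmod 8$ is ruled out since those $n$ fall under case (d) with $e=1$ only through $b=2$, already covered), the two formulations match.
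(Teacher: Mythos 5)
Your proposal is correct and is essentially the paper's own argument: the paper states the corollary as an immediate consequence of Theorem \ref{genlthm}(b) and Theorem \ref{chithm}(ii), with the only work being the routine check that $\a(m)+b=\a(n)+(b-\a(b))$ reproduces the tabulated values of $\eps'_n$, exactly as you do. Your closing parenthetical about ``case (d) with $e=1$'' is garbled (case (d) requires $e\ge3$; even $n$ not divisible by $8$ are simply covered by case (c)), but this does not affect the correctness of the argument.
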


It can be verified that Corollary \ref{spin} is equivalent to the less tractable result \cite[Proposition 1.1]{R}. However, part (ii) of Theorem \ref{chithm}, which is needed in the proof of Theorem \ref{spinnonimm}, is new.

The thing that makes the orientable case easier than the Spin case is that, as we show in Section \ref{ifsec}, for the minimal value of $k$ in case (i) of Theorem \ref{chithm}, a mod-2 homology class dual to $\chi\sq^{n-k}\io_k$ is always in the image from $H_n(K(\zt,k);\Z)$, thus implying the ``if'' part of Theorem \ref{or}.  In the Spin case, if $n$ is not one of the integers included in Theorem \ref{spinnonimm}, we have not yet been able to determine whether, for the minimal value of $k$ in case (ii) of Theorem \ref{chithm},
a mod-2 homology class dual to $\chi\sq^{n-k}\io_k$ is in the image from $ko_n(K(\zt,k))$. Moreover, for $n\in\{9,10,11,12,17,33\}$, we find that there is not a mod-2 homology class dual to $\chi\sq^{n-k}\io_k$ for the minimal possible value of $k$  in the image from $ko_n(K(\zt,k))$, but if we increase $k$ by 1, the appropriate class is in this image.
As we will discuss in Section \ref{ifsec}, many of these results were obtained, from a somewhat different perspective, by the second author in \cite{W}.
Our result is as follows.
\begin{thm}\label{spinnonimm} The largest value of $c$ for which there is an $n$-dimensional Spin manifold with $\wbar_c\ne0$ is given in Table \ref{T4}.
\begin{table}[h]
\caption{Nonzero dual Stiefel-Whitney classes}
\label{T4}

\begin{tabular}{c|ccccc}
n&$8$--$12$&$13$--$15$&$16$--$17$&$18$--$23$&$32$--$33$\\
\hline
c&$6$&$7$&$14$&$15$&$30$
\end{tabular}
\end{table}

All dual Stiefel-Whitney classes are $0$ in Spin manifolds of dimension less than $8$.
\end{thm}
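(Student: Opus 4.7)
The plan is to combine Theorem \ref{genlthm}(b), Corollary \ref{spin}, and explicit calculations of $ko$-homology of mod-$2$ Eilenberg--MacLane spaces. Setting $k=n-c$, Theorem \ref{genlthm}(b) translates the existence of a Spin $n$-manifold with $\wbar_c\ne0$ into the existence of a class $\a\in ko_n(K(\zt,k))$ with $\la\chi\sq^{n-k}\io_k,h_*\a\ra\ne0$. Thus the theorem reduces, for each $n$ listed, to locating the smallest such $k$ and verifying both the obstruction (no such $\a$ for smaller $k$) and the realization (such $\a$ for the claimed $k$). The assertion for $n<8$ is immediate from Corollary \ref{spin}, which forces $c\le 0$ in that range.

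For the upper bound on $c$, Corollary \ref{spin} directly handles the dimensions $n\in\{8,13,14,15,16,18,19,20,21,22,23,32\}$, where $k\ge\a(n)+\eps'_n$ exactly matches the value required by the table. For the six exceptional dimensions $n\in\{9,10,11,12,17,33\}$, however, Corollary \ref{spin} falls short by one, and I must additionally rule out, at the minimal $k=\a(n)+\eps'_n$, the existence of any $\a\in ko_n(K(\zt,k))$ pairing nontrivially with $\chi\sq^{n-k}\io_k$. This is a genuinely homotopy-theoretic obstruction, since the dual mod-$2$ homology class exists in $H_n(K(\zt,k);\zt)$ but fails to lie in the Hurewicz image from $ko$-homology; I would establish it by running the Adams spectral sequence for $ko_*(K(\zt,k))$ in the relevant internal degree and tracking which permanent cycles can hit $\chi\sq^{n-k}\io_k$.

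For the lower bound on $c$, I must produce, for each $n$ in the table with its claimed optimal $k$, an explicit $\a\in ko_n(K(\zt,k))$ realizing the pairing. A substantial portion of the required input is already available in the second author's earlier paper \cite{W}, which computes $ko_*(K(\zt,k))$ from a closely related perspective; those computations can be repackaged, using the cohomological form of $\chi\sq^{n-k}\io_k$ given by Theorem \ref{chithm}, to identify concrete Hurewicz images that pair correctly.

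The main obstacle is the coordinated negative-and-positive analysis in the six exceptional dimensions: for each $n\in\{9,10,11,12,17,33\}$ I must simultaneously show that the dual class is not in the image of the Hurewicz map from $ko_n(K(\zt,k))$ at $k=\a(n)+\eps'_n$, and that it does lie in that image for $ko_n(K(\zt,k+1))$. This requires delicate Adams-spectral-sequence bookkeeping---tracking extensions, $h_1$- and $v_1$-actions, and the edge behavior of $ko_*(K(\zt,k))$---case by case through $n=33$, and it is this computation that makes the Spin bound strictly harder than the orientable one in Theorem \ref{or}.
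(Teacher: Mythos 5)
Your reduction is the same as the paper's: Theorem \ref{genlthm}(b) plus Theorem \ref{chithm}(ii) (equivalently Corollary \ref{spin}) handles $n<8$ and gives the sharp upper bound for $n\in\{8,13,\dots,16,18,\dots,23,32\}$, and you correctly isolate $n\in\{9,10,11,12,17,33\}$ as the dimensions where one must additionally show that the class dual to $\chi\sq^{n-k}\io_k$ at the minimal $k$ is not in the Hurewicz image from $ko_n(K(\zt,k))$, while it is in the image after increasing $k$ by one. But at that point your proposal stops at a plan: ``running the Adams spectral sequence \dots and tracking which permanent cycles can hit $\chi\sq^{n-k}\io_k$'' is both imprecise (what is needed is that the filtration-$0$ Ext class dual to $\chi\sq^{n-k}\io_k$ supports, or does not support, a differential) and, more importantly, it supplies no mechanism for actually establishing the required differentials and permanence statements, which are the entire content of the paper's proof. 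The differentials in question are not forced by internal $E_2$ bookkeeping: the paper gets the $d_2$'s killing the classes for $9\le n\le12$ from comparison with $H_*(K(\zt,2);\Z)$ (Browder/Cl\'ement) together with the $bo$-module action ($\tau\cdot A=h_0^3C$, $\tau\cdot B=h_0^3D$), and for $n=17,33$ it must pass to $ku_*(K(\zt,2))$ and use the $H$-space squaring map on $CP^\infty$ with the $[2]$-series $2x+v_1x^2$ to prove $v_1^4u_2^4=0$ and $v_1^8u_2^8=0$, which is what forces $d_4(x_{17})=v_1^4u_2^4$ and $d_8(x_{33})=v_1^8u_2^8$. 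Nothing in your outline produces these high differentials.

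The positive half has the same problem. Permanence of the classes dual to $\chi\sq^{14}\io_2$ and $\chi\sq^{30}\io_2$ rests on the vanishing $\ext_{A_1}^{s,t}(K(\zt,2))=0$ for $s>0$, $t-s\equiv7\pmod 8$ in the relevant range, and permanence of the class dual to $\chi\sq^{15}\io_3$ (needed for $n=17$ at $k=3$ and for $18\le n\le23$ by naturality) requires the explicit $A_1$-module decomposition of $H^*(K(\zt,3))$ through grading $20$ and the $h_1$-argument excluding the single possible differential from $\ext^{0,18}$ to $\ext^{4,21}(M_9)$. Your plan to ``repackage'' the computations of \cite{W} is also risky precisely where it matters most: the paper notes that \cite{W}, while correct on the $A_1$-module splitting and Ext groups of $K(\zt,2)$, made an incorrect deduction about the differentials near dimension $33$, so the $n=33$ case (and the companion $n=17$ analysis) cannot simply be quoted from there --- it is exactly where the new $ku$/$CP^\infty$ argument is required. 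So the skeleton of your argument matches the paper, but the proof as proposed is missing the ideas (comparison with integral homology, the $bo$-action, the $ko\to ku$ map plus the $[2]$-series computation, and the vanishing-line and $h_1$ arguments) that make the six exceptional cases and the realization statements actually provable.
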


Thus, for the values of $c$ in Theorem \ref{spinnonimm}, there exists an $n$-dimensional Spin manifold which does not immerse in $\R^{n+c-1}$, but Stiefel-Whitney classes allow the possibility that all immerse in $\R^{n+c}$.
For values of $n$ not included in Theorem \ref{spinnonimm}, we do not yet know the largest possible value of $c$.

We close this introductory section with this delayed proof.

\begin{proof}[Proof of Theorem \ref{genlthm}] We prove (b); the proof of (a) is similar, using \cite{CF}. We first prove the necessary condition.

Assume a nonimmersion of an $n$-manifold $M$ in $\R^{2n-k-1}$ is detected by $\wbar_{n-k}\ne0$. Then, by Poincar\'e
duality, there must exist a class $x\in H^{k}(M;\zt)$ such that $\wbar_{n-k} x$ is the nonzero element of $H^n(M;\zt)$. For a Spin manifold, the nonzero element of $H^n(M;\zt)$ is not in $\im(\sq^1,\sq^2)$. It is well-known (e.g., \cite{MP}) that $\wbar_{n-k} x=\chi\sq^{n-k}(x)$. Consideration of the map $f:X\to K(\zt,k)$ for which $f^*(\iota_{k})=x$ shows that $\chi\sq^{n-k}(\iota_{k})$ is not in $\im(\sq^1,\sq^2)$.

  The group $MSpin_n(X)=\pi_n(MSpin\w X)$ consists of cobordism classes of pairs $(M,f)$ where $M$ is an $n$-dimensional Spin manifold and $f:M\to X$ is a map. The Hurewicz homomorphism $MSpin_n(X)\to H_n(X;\zt)$ satisfies $h_*([M,f])=f_*(\rho([M]))$, where $[M]\in H_n(M;\Z)$ is the orientation class.
By \cite{ABP}, localized at 2, there is an equivalence $MSpin \to bo\vee W'$, where $W'$ is a 7-connected spectrum. Let $H\zt$ denote the mod-2 Eilenberg MacLane spectrum. The morphism $[MSpin,H\zt]\to[bo,H\zt]$ is an isomorphism, since $[W',H\zt]=0$.

There exists a nonimmersion of an $n$-dimensional Spin-manifold in $\R^{2n-k-1}$  detected by Stiefel-Whitney classes iff there is an $n$-dimensional Spin manifold $M$ and an element  $x\in H^k(M;\zt)$ such that $\la\chi\sq^{n-k}x,\rho[M]\ra\ne0$ iff there is an $n$-dimensional Spin manifold $M$ and a map $f:M\to K(\zt,k)$ such that $\la\chi\sq^{n-k}\io_k,f_*(\rho[M])\ra\ne0$ iff $\exists\a\in MSpin_n(K(\zt,k))$ such that
$\la\chi\sq^{n-k}\io_k,h_*\a\ra\ne0$ iff $\exists\a\in ko_n(K(\zt,k))$ such that
$\la\chi\sq^{n-k}\io_k,h_*\a\ra\ne0$.\end{proof}

\section{Proof of Theorem \ref{chithm}}\label{pfsec}

We use Milnor basis  and the following facts, where $\sq(R)=\sq(r_1,\ldots,r_s)$. (\cite{K}, \cite{Mil}) We assume that the reader is familiar with the complicated multiplication rule for Milnor basis elements.
\begin{prop}\label{Mil}
\begin{itemize}
\mbox{}
\item[i.] $|\sq(R)|=\sum(2^j-1)a_j$ and $\exc(R)=\sum a_j$.
\item[ii.] $\chi\sq^d$ is the sum of all $\sq(R)$ with $|\sq(R)|=d$.
\item[iii.] $\sq(R)\not\in\im(\sq^1,\sq^2)$ iff $r_1\equiv0$ mod 4 and $r_2\equiv0$ mod 2.
\item[iv.] $H^*(K(\zt,k);\zt)$ is a polynomial algebra generated by all $\sq(R)\iota_k$ for which $\exc(R)<k$.
\item[v.] $\sq(R)\iota_k=0$ if $\exc(R)>k$.
\item[vi.] If $R=(r_1,\ldots)$, $\exc(R)=k$, and $r_i=0$ for $i<t$, then $\sq(R)\iota_k=(\sq(S)\iota_k)^{2^t}$, where $S=(r_{t+1},\ldots)$.
\end{itemize}
\end{prop}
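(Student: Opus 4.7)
My approach is to apply Proposition \ref{Mil} to expand $\chi\sq^{n-k}\io_k$ in the Milnor basis and analyze the parity obstructions dictated by (iii). Combining (ii) and (v),
\[
\chi\sq^{n-k}\io_k \;=\; \sum_{\substack{|R|=n-k \\ \exc(R)\le k}} \sq(R)\io_k,
\]
and the basic arithmetic identity from (i) is $|R|+\exc(R)=\sum_{j\ge 1}2^jr_j$, so each surviving $R$ with $\exc(R)=e\le k$ satisfies $\sum_{j\ge 1}2^jr_j = n-k+e$. Combined with (iii) (and its $\sq^1$-only analogue, ``$\sq(R)\in\im(\sq^1)$ iff $r_1$ is odd''), the question becomes: which \emph{good} sequences $R$ --- those with $r_1$ even (part i) or with $r_1\equiv 0\pmod 4$ and $r_2$ even (part ii) --- both survive in the sum (i.e.\ satisfy $\exc(R)\le k$) and represent a class not already in the relevant image?

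For the lower bound (nonexistence for smaller $k$), I would show that when $k$ is strictly below the claimed threshold, every $R$ with $|R|=n-k$ and $\exc(R)\le k$ is \emph{bad} in the above sense, so $\sq(R)\io_k\in\im(\sq^1)$ (resp.~$\im(\sq^1,\sq^2)$) individually, hence so is the whole sum. This reduces to a 2-adic bookkeeping argument: the constraint $\sum 2^j r_j = n-k+\exc(R)$ with $\exc(R)\le k$ limits the minimum possible value of $r_1$ modulo $2$ or $4$ in terms of $\a(n)$ and of the residue of $n$ mod a suitable power of $2$. In the boundary cases where $\exc(R)=k$, (vi) rewrites $\sq(R)\io_k$ as a $2^t$-th power, and I would argue separately that such squares still lie in the image when the underlying generator does, by applying $\sq^1$ to an explicit product and using the Leibniz rule.

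For the upper bound at the claimed threshold, I would construct an explicit good $R$ in each of the five cases: in (a),(c) take $R$ built from the binary expansion of $m$, with $r_1=2b$ (resp.~a multiple of $4$ depending on $b$) to achieve the parity condition and with higher $r_j$'s encoding the bits of $m$ so that $\sum 2^j r_j=n$ and $\sum r_j=\a(m)+b$. In (b),(d),(e), the residue conditions on $n$ modulo a power of $2$ prevent $r_1$ (or $r_2$) from being forced into the bad class at $k=\a(n)$, and an extra $+1$ or $+2$ in $k$ gives the required slack in $\sum 2^j r_j=n-k+\exc(R)$. I then verify the chosen $\sq(R)\io_k$ is nonzero modulo the image, using that it is a polynomial generator by (iv) whose $r_1,r_2$ parities rule it out by (iii). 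The main obstacle will be cases (d),(e) of part (ii): the modular conditions $n\equiv 2^e$ or $3\cdot 2^e\pmod{2^{e+2}}$ express subtle 2-adic structure requiring careful tracking of binary carries when forcing $r_1\equiv 0\pmod 4$ and $r_2$ even simultaneously, and distinguishing the $+1$ and $+2$ cases amounts to verifying whether the single or double obstruction is active.
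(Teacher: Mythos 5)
You were asked to prove Proposition \ref{Mil} itself, but your proposal never does: it opens with ``apply Proposition \ref{Mil}'' and then outlines a strategy for Theorem \ref{chithm}, treating the proposition as a known input. Nothing in your text establishes any of the six assertions: the degree and excess formulas $|\sq(R)|=\sum(2^j-1)r_j$, $\exc(R)=\sum r_j$; Milnor's expansion of $\chi\sq^d$ as the sum of all $\sq(R)$ of degree $d$; the criterion $r_1\equiv0\ (4)$, $r_2\equiv0\ (2)$ for $\sq(R)\not\in\im(\sq^1,\sq^2)$; Serre's description of $H^*(K(\zt,k);\zt)$ as a polynomial algebra on the $\sq(R)\io_k$ with $\exc(R)<k$; the vanishing of $\sq(R)\io_k$ when $\exc(R)>k$; or the $2^t$-power identity when $\exc(R)=k$. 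For comparison, the paper does not reprove these facts either: it cites Kraines \cite{K} and Milnor \cite{Mil}, so an acceptable treatment is essentially a citation, or a derivation from Milnor's Hopf-algebra structure on the dual Steenrod algebra together with Serre's theorem and the excess formula. Your outline supplies neither, so as a proof of the stated proposition it is empty.

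Even read as a sketch of Theorem \ref{chithm} (the result your outline actually targets, and which the paper proves in Section \ref{pfsec}), there is a substantive gap. In that theorem, $\im(\sq^1)$ and $\im(\sq^1,\sq^2)$ mean the image of the operations acting on the ring $H^*(K(\zt,k);\zt)$, not membership of $\chi\sq^{n-k}$ in the left ideal of the Steenrod algebra generated by $\sq^1,\sq^2$. Term-by-term parity bookkeeping on Milnor summands can decide whether each $\sq(R)$ is ``bad'' as an operation, but it cannot by itself decide whether the class $\chi\sq^{n-k}\io_k$ is hit in the polynomial algebra, because of decomposables: this is precisely why the paper must exhibit explicit factorizations such as (\ref{s1}) and (\ref{disp}) to show the class \emph{is} hit for small $k$, and must construct homomorphisms $\phi$ annihilating $\im(\sq^1,\sq^2)$ (via the monomial families $A_i$ and $C_i$, and the element $Y$ in part (e)) to show it is \emph{not} hit at the threshold. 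Your plan contains neither of these essential steps, so it would not go through even for the theorem it implicitly addresses.
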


\begin{proof}[Proof of parts (a) and (c) of Theorem \ref{chithm}]
We prove part (c). The proof of part (a) is completely analogous.

Write $8m=\sum_{j\ge1}\eps_j2^j$ with $\eps_j\in\{0,1\}$. Then $\sq(E)=\sq(\eps_1,\ldots,\eps_r)$ has $|\sq(E)|=8m-\a(m)$, $\exc(E)=\a(m)$, and is not in $\im(\sq^1,\sq^2)$,  since $\eps_1=\eps_2=0$. With $k=\a(m)+b$, hence $n-k=8m-\a(m)$, $\chi\sq^{n-k}\iota_k$ contains the term $\sq(E)\iota_k$. This is part of the basis, since $\exc(E)<k$, can't be cancelled by other terms in $\chi\sq^{n-k}\iota_k$, and is not in $\im(\sq^1,\sq^2)$.

Now suppose $\sq(R)=\sq(r_1,\ldots,r_s)$ has $|\sq(R)|=n-\ell$ with $\ell\le \a(m)+b$, $\exc(R)\le\ell$, and $r_1\equiv0\mod 4$ and $r_2\equiv0$ mod 2. Then $\sum 2^jr_j=|\sq(R)|+\exc(R)\le n=8m+b$ implies \begin{equation}\label{deg}\sum 2^jr_j\le\sum2^j\eps_j=8m\end{equation} since $\sum 2^jr_j$ is a multiple of 8.  Let $b_j=r_j-\eps_j\ge-1$, and $r_1=4c_1$ and $r_2=2c_2$.  Then (\ref{deg}) implies
\begin{equation}\label{e1}8c_1+8c_2+\sum_{j\ge3}2^jb_j\le0,\end{equation}
while $\ell\le \a(m)+b$ implies $8m-\a(m)\le|\sq(R)|$ hence
\begin{equation}0\le 4c_1+6c_2+\sum_{j\ge3}(2^j-1)b_j.\label{e2}\end{equation}

We claim that the only solution of (\ref{e1}) and (\ref{e2}) with $c_j\ge0$ and $b_j\ge-1$ is the zero solution, which implies our result, namely that the only solution in part (b) with $k\le\a(m)+b$ is the one described at the beginning of the proof. First note that if there is a solution with $c_1$ or $c_2$ nonzero, they can be incorporated into $b_3$, so we may omit $c_1$ and $c_2$. Let $S=\{j:b_j=-1\}$. We wish to show that for a multiset of $t$'s (distinct from $S$ but not necessarily from one another), the only way to have $\sum 2^t\le\sum_S2^j$ and $\sum_S(2^j-1)\le\sum(2^t-1)$ is the empty sums. For example, having $b_j=2$ contributes two $2^t$'s with $t=j$.

Combining two equal $t$-terms makes the second inequality harder to satisfy. We perform this combining, and cancel whenever equal exponents occur on both sides. Thus we may assume all exponents are distinct. The largest exponent, $j$, must occur in $S$, and there is no way that distinct $(2^t-1)$'s less than that can be as large as $2^j-1$.\end{proof}

\medskip
\begin{proof}[Proof of part (b)] Let $n=4m=\sum_{j\ge1} 2^j\eps_j$ with $\eps_j\in\{0,1\}$ and $e\ge2$ the smallest subscript $j$ for which $\eps_j=1$. Note that $\a(m)=\a(n)$.

Suppose $R=(r_1,\ldots,r_s)$ has $|\sq(R)|=4m-\ell$ with $\ell\le\a(m)$, $\exc(R)\le\ell$, and $r_1\equiv0$ mod 2 (so $\sq(R)\not\in\im(\sq^1)$). Similarly to the proof of part (b), the only possibility is $r_j=\eps_j$ for all $j$. [\![$\sum 2^jr_j=|\sq(R)|+\exc(R)\le 4m=\sum 2^j\eps_j$. With $b_j=r_j-\eps_j\ge-1$ and $r_1=2c_1$, we get $4c_1+\sum_{j\ge2} 2^jb_j\le0$ and, from $4m-\a(m)\le |\sq(R)|$, $0\le2c_1+\sum_{j\ge2}(2^j-1)b_j$. As before, this has only the zero solution.]\!]

However,
\begin{eqnarray}\sq(\eps_1,\ldots,\eps_r)\iota_{\a(m)}&=&(\sq(\eps_e,\ldots,\eps_r)\iota_{\a(m)})^{2^{e-1}}\label{s1}\\
&=&\sq^1(\sq(0,\eps_{e+1},\ldots,\eps_r)\iota_{\a(m)}\cdot(\sq(\eps_e,\ldots,\eps_r)\iota_{\a(m)})^{2^{e-1}-1})\nonumber\end{eqnarray}
since $\eps_e=1$. Thus $\chi\sq^{n-k}\io_k\in\im(\sq^1)$ for $k\le\a(m)$.

 Now we consider $k=\a(m)+1$. Let $t_{e-1}=2$, $t_e=0$, else $t_j=\eps_j$, and let $E'$ be the sequence $(t_e,t_{e+1},\ldots)$. Note that $\exc(E')=\a(m)-1$. Then
$$\sq(t_1,\ldots,t_s)\iota_{\a(m)+1}=(\sq(E')\iota_{\a(m)+1})^{2^{e-1}}.$$
We claim that $(\sq(E')\iota_{\a(m)+1})^{2^{e-1}}$ cannot occur as a summand in $\sq^1(M)$ for any monomial $M$ in classes $\sq(R)\iota_{\a(m)+1}$ with $\exc(R)\le\a(m)$. This implies that for  $k=\a(m)+1$, $\chi\sq^{n-k}\iota_k\not\in\im(\sq^1)$ because it contains the term $\sq(t_1,\ldots)\io_k$.

To prove the claim, first note that since $E'$ starts with 0, $(\sq(E')\io_k)^{2^{e-1}}$ cannot be obtained in $\im(\sq^1)$ as in (\ref{s1}). The other feature that keeps it out of $\im(\sq^1)$ is that $k-\exc(E')=2$. This implies that to have $\sq(a_1,\ldots,a_r)\io_k=(\sq(E')\io_k)^{2^p}$ from \ref{Mil}(vi), it must be that $(a_1,\ldots,a_r)=(0^{p-1},2,E')$. This would give
$$(\sq(E')\io_k)^{2^{e-1}}=(\sq(E')\io_k)^{2^{e-1}-2^p}\sq(0^{p-1},2,E'),$$
but this is not in $\im(\sq^1)$ since $\sq(0^{p-1},2,E')\not\in\im(\sq^1)$.

\end{proof}

The following elementary lemma will be useful.
\begin{lem} Let $n=\sum\eps_i 2^i$ with $\eps_i\in\{0,1\}$.\label{lem}
\begin{itemize}
\item[a.] Suppose $n\equiv0\ (4)$ and $\sum r_i(2^i-1)=n-\a(n)-1$ with $r_i\ge0$. Then $\sum r_i\ge\a(n)+1$ with equality if and only if $(r_1,\ldots)$ is obtained from $(\eps_1,\ldots)$ by adding some $(0,\ldots,0,2,-1,0,\ldots)$.
\item[b.] Suppose $n\equiv0\ (8)$ and \begin{equation}\label{dis}\sum r_i(2^i-1)=n-\a(n)-2\end{equation} with $r_i\ge0$. Then $\sum r_i\ge\a(n)+2$ with equality if and only if $(r_1,\ldots)$ is obtained from $(\eps_1,\ldots)$ by two steps of adding some $(0,\ldots,0,2,-1,0,\ldots)$.
\end{itemize}
\end{lem}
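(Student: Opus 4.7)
The plan is to convert the hypothesis $\sum r_i(2^i-1)=n-\a(n)-c$ (with $c=1$ in (a) and $c=2$ in (b)) to the equivalent form $\sum_{i\ge1} r_i 2^i = n-\a(n)-c+s$, where $s:=\sum r_i$. The key fact I will use is that for a fixed even $M\ge 0$, the minimum of $\sum r_i$ over nonnegative representations $M=\sum_{i\ge1}r_i 2^i$ equals $\a(M/2)$, attained uniquely by the (shifted) binary expansion of $M/2$; odd $M$ admits no such representation.

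Parity forces $s\equiv \a(n)+c\pmod 2$, so any candidate value below $\a(n)+c$ has the form $s=\a(n)+c-2k$ with $k\ge1$, corresponding to $M=n-2k$. I will derive a contradiction by showing $\a(n/2-k)>\a(n)+c-2k$ in each such case. For $k\ge2$, Kummer's subadditivity $\a(n/2)\le \a(n/2-k)+\a(k)$ yields $\a(n/2-k)\ge\a(n)-\a(k)$, and the elementary inequality $2k-\a(k)\ge c+1$ (valid for all $k\ge 2$ and $c\le 2$) closes the argument. For $k=1$, I apply the standard identity $\a(n/2-1)=\a(n)+\nu_2(n)-2$, which combined with the hypothesis $\nu_2(n)\ge c+1$---namely $4\mid n$ in (a) and $8\mid n$ in (b)---gives the required strict inequality.

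For the characterization of equality, any $(r_i)$ with $s=\a(n)+c$ satisfies $\sum_{i\ge1}r_i 2^i = n$. I plan to iteratively apply the carry $r_i\mapsto r_i-2$, $r_{i+1}\mapsto r_{i+1}+1$ at any index with $r_i\ge 2$; each carry strictly decreases $\sum r_i$ by $1$ while preserving $\sum r_i 2^i$, and the process must terminate at the unique weight-$\a(n)$ representation $(\eps_i)$ after exactly $c$ steps. Reversing the sequence of carries expresses $(r_i)$ as $(\eps_i)$ plus $c$ successive additions of vectors of the form $(0,\dots,0,2,-1,0,\dots)$, each valid because the relevant entry in position $j+1$ is $\ge 1$ at the moment of application; this is precisely the form claimed in the lemma.

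The main obstacle is the $k=1$ subcase of the lower bound, where Kummer alone only yields $\a(n/2-1)\ge\a(n)-1$, not strict enough, so the precise strength of the divisibility hypotheses $4\mid n$ or $8\mid n$ must be invoked via the $\nu_2(n)$ identity. A secondary subtlety in part (b) is that the two splits may cascade at overlapping positions $(j,j+1)$ and $(j-1,j)$; this is handled uniformly by the carry-reversal argument, but one should verify briefly that no weight-$(\a(n)+2)$ representation escapes the enumeration.
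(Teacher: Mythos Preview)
Your proposal is correct and follows essentially the same approach as the paper: both transform the hypothesis into $\sum_{i\ge1} r_i 2^i = n-\a(n)-c+s$ and argue via the minimum number of $2$-powers needed. The paper simply checks the two candidate values $s=\a(n)$ and $s=\a(n)+1$ directly and asserts the splitting characterization when $s=\a(n)+c$, whereas you fill in the lower bound more systematically via parity, Kummer subadditivity for $k\ge2$, and the identity $\a(n/2-1)=\a(n)+\nu_2(n)-2$ for $k=1$; the extra care is welcome but not a different idea.
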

\begin{proof} We prove (b), as (a) is similar. Let $n=8m+8$. If $\sum r_i=\a(n)+1$, then, adding this to (\ref{dis}), $8m+7$ has been obtained as the sum of $\a(n)+1$
not-necessarily-distinct 2-powers. Three of those must be used for the 7, so $8m$ is the sum of $(\a(n)-2)$ 2-powers. But $\a(8m)\ge\a(n)-1$, contradiction. A similar contradiction is obtained if $\sum r_i=\a(n)$. If $\sum r_i=\a(n)+2$, then $n$ is obtained as the sum of $\a(n)+2$
not-necessarily-distinct 2-powers. The only way this can be done is by twice splitting some $2^i$ into $2^{i-1}+2^{i-1}$.
\end{proof}

\medskip

\medskip\begin{proof}[Proof of part (d)] Note that for $k=\a(n)$, $\chi\sq^{n-k}\iota_k\in\im(\sq^1)\subset\im(\sq^1,\sq^2)$ by part (c).

Now let $k=\a(n)+1$. Write $n=2^e+2^{e+1}m$ with $m$ even and $m=\sum_{i\ge0}\delta_i2^i$ with $\delta_i\in\{0,1\}$. Let $v=(\delta_0,\delta_1,\ldots)$. Note that $\delta_0=0$. We first show that, mod $\im(\sq^1,\sq^2)$, $\chi\sq^{n-k}\iota_k\equiv(\sq(0,v)\iota_k)^{2^{e-1}}$. To see this, whenever $\delta_i=1$, let $v_i=v+(0^{i-1},2,-1,0,\ldots)$. Then, by Lemma \ref{lem}(a)
$$\chi\sq^{n-k}=\sq(0^{e-2},2,0,v)+\sum_{\delta_i=1}\sq(0^{e-1},1,v_i)+\text{terms of excess}>k.$$
Thus
\begin{eqnarray*}\chi\sq^{n-k}\iota_k&=&\sq(0^{e-2},2,0,v)\iota_k+\sum_{\delta_i=1}\sq(0^{e-1},1,v_i)\iota_k\\
&=&(\sq(0,v)\iota_k)^{2^{e-1}}+\sum_{\delta_i=1}(\sq(1,v_i)\iota_k)^{2^{e-1}}.\end{eqnarray*}
But \begin{equation}\label{disp}(\sq(1,v_i)\iota_k)^{2^{e-1}}=\sq^1(\sq(0,v_i)\iota_k\cdot(\sq(1,v_i)\iota_k)^{2^{e-1}-1})\in\im(\sq^1),\end{equation} proving that $\chi\sq^{n-k}\io_k\equiv(\sq(0,v)\iota_k)^{2^{e-1}}$.

We will complete the proof of part (d) by constructing a homomorphism $\phi:H^n(K(\zt,k);\zt)\to\zt$ such that $\phi(\im(\sq^1,\sq^2))=0$ and $\phi((\sq(0,v)\iota)^{2^{e-1}})=1$. Here and below, we write $\iota$ for $\iota_k$. Let
\begin{eqnarray*}A_1&=&(\sq(0,v)\io)^{2^{e-1}}\\
A_2&=&(\sq(0,v)\io)^{2^{e-1}-4}(\sq(1,v)\io)^2\sq(0,0,v)\io\\
A_3&=&(\sq(0,v)\io)^{2^{e-1}-3}\sq(1,v)\io\cdot\sq(1,0,v)\io\\
A_4&=&(\sq(0,v)\io)^{2^{e-1}-4}\sq(1,0,v)\io\cdot\sq(0,1,v)\io\\
A_5&=&(\sq(0,v)\io)^{2^{e-1}-7}(\sq(1,v)\io)^3\sq(0,0,v)\io\cdot\sq(1,0,v)\io\\
A_6&=&(\sq(0,v)\io)^{2^{e-1}-8}(\sq(1,v)\io)^2\sq(0,0,v)\io\cdot\sq(1,0,v)\io\cdot\sq(0,1,v)\io.\end{eqnarray*}
Here $A_5=0=A_6$ if $e=3$. Then $\phi$ is defined to be the homomorphism which sends the monomials $A_i$ to 1, and all other monomials in the generators $\sq(R)\io$ with $\exc(R)\le\a(m)+1$ to 0.

One can verify that the only way that any of the $A_i$ can occur as a summand of $\sq^1(M)$ or $\sq^2(M)$ for a monomial $M$ of the appropriate degree is as follows, where $\equiv$ is mod the span of all monomials except the $A_i$. Since the number of $A_i$'s in each of these elements of $\im(\sq^1,\sq^2)$ is even, the claim that $\phi(\im(\sq^1,\sq^2))=0$ is proved.
\begin{eqnarray*}\sq^2\bigl((\sq(0,v)\io)^{2^{e-1}-2}\sq(0,0,v)\io\bigr)&\equiv&A_1+A_2\\
\sq^1\bigl((\sq(0,v)\io)^{2^{e-1}-3}\sq(1,v)\io\cdot\sq(0,0,v)\io\bigr)&\equiv&A_2+A_3\\
\sq^1\bigl((\sq(0,v)\io)^{2^{e-1}-4}\sq(0,0,v)\io\cdot\sq(0,1,v)\io\bigr)&\equiv&A_2+A_4\\
\sq^2\bigl((\sq(0,v)\io)^{2^{e-1}-5}\sq(1,v)\io\cdot\sq(0,0,v)\io\cdot\sq(1,0,v)\io\bigr)&\equiv&A_3+A_5\\
\sq^2\bigl((\sq(0,v)\io)^{2^{e-1}-6}\sq(0,0,v)\io\cdot\sq(1,0,v)\io\cdot\sq(0,1,v)\io\bigr)&\equiv&A_4+A_6\\
\sq^1\bigl((\sq(0,v)\io)^{2^{e-1}-7}\sq(1,v)\io\cdot\sq(0,0,v)\io\cdot\sq(1,0,v)\io\cdot\sq(0,1,v)\io\bigr)&\equiv&A_5+A_6.\end{eqnarray*}
The last three are not present when $e=3$.

As an aid for the reader doing this verifying, we note the following relations, using Proposition \ref{Mil}(vi) in the first three. \begin{eqnarray*}\sq^2(\sq(0,0,v)\io)&=&(\sq(0,v)\io)^2\\
\sq^1(\sq(0,1,v)\io)&=&(\sq(1,v)\io)^2\\
\sq^2(\sq(0,v)\io)&=&(\sq(v)\io)^2\\
\sq^2(\sq(1,0,v)\io)&=&\sq(0,1,v)\io.\end{eqnarray*}
Trickier than computing the $\sq^1$ and $\sq^2$ is determining that the $A_i$ cannot be achieved in any other way. For example, you might think that $(\sq(0,v)\io)^2$ as part of the first factor of $A_2$ might be obtained from $\sq^2(\sq(0,0,v)\io)$, but it doesn't occur because it would be coming from $(\sq(0,0,v)\io)^2$ and so would get a coefficient 2.
\end{proof}

\medskip
\begin{proof}[Proof of part (e)] Let $n=3\cdot2^e+2^{e+2}m$ with $m=\sum_{i\ge0}\delta_i2^i$ and $v=(\delta_0,\delta_1,\ldots)$. We will first show that $\chi\sq^{n-k}\io_k\in\im(\sq^1,\sq^2)$ when $k=\a(m)+3$.
Whenever $\delta_i=1$ with $i\ge1$, let $v_i=v+(0^{i-2},2,-1,0,\ldots)$. By Lemma \ref{lem}(a), the only summands of $\chi\sq^{n-k}$ of excess $\le\a(m)+3$ are $\sq(0^{e-2},2,0,1,v)$, $\sq(0^{e-1},3,0,v)$, $\sq(0^{e-1},1,1,v_i)$, and if $\delta_0=1$, $\sq(0^{e-1},1,3,0,\delta_1,\delta_2,\ldots)$.
Then, with $\io=\io_k$,
$$\chi\sq^{n-k}\io=(\sq(0,1,v)\io)^{2^{e-1}}+(\sq(0,v)\io)^{2^e}+\sum_{\delta_i=1}(\sq(1,v_i)\io)^{2^e}+\eps(\sq(3,0,\delta_1,\ldots)\io)^{2^e}.$$
Mod $\im(\sq^1)$, this equals $Y^{2^{e-1}}$, where $Y=\sq(0,1,v)\io+(\sq(0,v)\io)^2$, since the  terms after the first two are in $\im(\sq^1)$, similarly to (\ref{disp}). This $Y$ is a generalization of $\sq^2\sq^1$, and satisfies $\sq^2Y=0$, $\sq^1(Y)=\sq(1,1,v)\io$, and $\sq^2(\sq(1,0,v)\io)=Y$.
Let
\begin{eqnarray*}B_1&=&Y^{2^{e-1}-3}\sq(1,0,v)\io(\sq(1,1,v)\io)^2\\
B_2&=&Y^{2^{e-1}-4}\sq(0,0,v)\io(\sq(1,1,v)\io)^3\\
B_3&=&Y^{2^{e-1}-4}(\sq(2,0,v)\io)^2(\sq(1,1,v)\io)^2\\
B_4&=&Y^{2^{e-1}-4}(\sq(0,v)\io)^2\sq(1,0,v)\io(\sq(1,1,v)\io)^2.\end{eqnarray*}
One can verify the following equations. Summing them yields the desired conclusion, $Y^{2^{e-1}}\in\im(\sq^1,\sq^2)$.
\begin{eqnarray*}\sq^2\bigl(Y^{2^{e-1}-1}\sq(1,0,v)\io\bigr)&=&Y^{2^{e-1}}+B_1\\
\sq^1\bigl(Y^{2^{e-1}-3}\sq(0,0,v)\io(\sq(1,1,v)\io)^2\bigr)&=&B_1+B_2\\
\sq^2\bigl(Y^{2^{e-1}-4}\sq(0,0,v)\io\cdot\sq(2,0,v)\io(\sq(1,1,v)\io)^2\bigr)&=&B_2+B_3+B_4\\
\sq^1\bigl(Y^{2^{e-1}-3}(\sq(2,0,v)\io)^2\sq(1,1,v)\io\bigr)&=&B_3\\
\sq^1\bigl(Y^{2^{e-1}-4}(\sq(0,v)\io)^2\sq(0,0,v)\io(\sq(1,1,v)\io)^2\bigr)&=&B_4.\end{eqnarray*}

Again let $n=3\cdot2^e+2^{e+2}m$ with $m=\sum_{i\ge0}\delta_i2^i$ and $v=(\delta_0,\delta_1,\ldots)$. We will now show that $\chi\sq^{n-k}\io_k\not\in(\sq^1,\sq^2)$ when $k=\a(m)+4=\a(n)+2$. By Lemma \ref{lem}(b), $\chi\sq^{n-k}$ has many summands of excess $k$ (and none with smaller excess). Letting $v'$ denote $v$ with the addition of one $(\ldots,0,2,-1,0,\ldots)$, $v''$ obtained from $v$ by two such additions, and $v_0$ being $v$ with $\delta_0=1$ changed to $\delta_0=0$, we list these now.
\begin{eqnarray*}\sq(0^{e-2},2,2,0,v)\io_k&=&(\sq(2,0,v)\io_k)^{2^{e-1}}\\
\sq(0^{e-3},2,1,0,1,v)\io_k&=&(\sq(1,0,1,v)\io_k)^{2^{e-2}}\\
\sq(0^{e-2},2,0,1,v')\io_k&=&(\sq(0,1,v')\io_k)^{2^{e-1}}\\
\sq(0^{e-1},3,0,v')\io_k&=&(\sq(0,v')\io_k)^{2^e}\\
\sq(0^{e-1},1,1,v'')\io_k&=&(\sq(1,v'')\io_k)^{2^e}\\
\sq(0^{e-2},2,0,3,v_0)\io_k&=&(\sq(0,3,v_0)\io_k)^{2^{e-1}}\\
\sq(0^{e-1},3,2,v_0)\io_k&=&(\sq(2,v_0)\io_k)^{2^e}.\end{eqnarray*}

Similarly to the proof of part (d), we will construct a homomorphism $\phi$ from $H^n(K(\zt,k);\zt)$ to $\zt$ sending $(\sq(2,0,v)\io_k)^{2^{e-1}}$ and nine other specified monomials to 1, and all others to 0, and annihilating $(\im(\sq^1,\sq^2))$. The above monomials other than the first are sent to 0, so we need not worry about them.

We will take some notational shortcuts, writing $(r_1,r_2)^p$ for $\sq(r_1,r_2,0,v)\iota_k)^p$, and similarly with $r_2$ omitted. The ten monomials $C_i$ that are mapped to 1 by $\phi$ are listed below.
\begin{eqnarray*} C_1&=&(2)^{2^{e-1}}\\
C_2&=&(2)^{2^{e-1}-4}(3)^2(0,2)\\
C_3&=&(2)^{2^{e-1}-4}(1,2)(0,3)\\
C_4&=&(2)^{2^{e-1}-3}(3)(1,2)\\
C_5&=&(0)(2)^{2^{e-1}-4}(3)(0,3)\\
C_6&=&(0)(2)^{2^{e-1}-3}(3)^2\\
C_7&=&(2)^{2^{e-1}-8}(3)^2(0,2)(1,2)(0,3)\\
C_8&=&(2)^{2^{e-1}-7}(3)^3(0,2)(1,2)\\
C_9&=&(0)(2)^{2^{e-1}-8}(3)^3(0,2)(0,3)\\
C_{10}&=&(0)(2)^{2^{e-1}-7}(3)^2(1,2)(0,3).
\end{eqnarray*}
Note that $C_7$ through $C_{10}$ are only present for $e\ge4$. The only relations involving $\sq^1(M)$ or $\sq^2(M)$ involving any of the $C_i$ are as follows, where again $\equiv$ is mod monomials which are not one of our $C_i$.
\begin{eqnarray*}\sq^1\bigl((2)^{2^{e-1}-4}(0,2)(0,3)\bigr)&=&C_2+C_3\\
\sq^1\bigl((2)^{2^{e-1}-3}(3)(0,2)\bigr)&=&C_2+C_4\\
\sq^1\bigl((0)(2)^{2^{e-1}-3}(0,3)\bigr)&\equiv&C_5+C_6\\
\sq^2\bigl((2)^{2^{e-1}-2}(0,2)\bigr)&=&C_1+C_2\\
\sq^2\bigl((0)(2)^{2^{e-1}-4}(3)(1,2)\bigr)&\equiv&C_4+C_5\\
\sq^2\bigl((0)(2)^{2^{e-1}-1}\bigr)&\equiv&C_1+C_{6}\\
\sq^1\bigl((2)^{2^{e-1}-7}(3)(0,2)(1,2)(0,3)\bigr)&\equiv&C_7+C_8\\
\sq^1\bigl((0)(2)^{2^{e-1}-7}(3)^2(0,2)(0,3)\bigr)&\equiv&C_9+C_{10}\\
\sq^2\bigl((2)^{2^{e-1}-6}(0,2)(1,2)(0,3)\bigr)&\equiv&C_3+C_7\\
\sq^2\bigl((2)^{2^{e-1}-5}(3)(0,2)(1,2)\bigr)&\equiv&C_4+C_8\\
\sq^2\bigl((0)(2)^{2^{e-1}-8}(3)^3(0,2)(1,2)\bigr)&\equiv&C_8+C_9\\
\sq^2\bigl((0)(2)^{2^{e-1}-6}(3)(0,2)(0,3)\bigr)&\equiv&C_5+C_9\\
\sq^2\bigl((0)(2)^{2^{e-1}-5}(1,2)(0,3)\bigr)&\equiv&C_3+C_{10}\\
\sq^2\bigl((0)(2)^{2^{e-1}-5}(3)^2(0,2)\bigr)&\equiv&C_2+C_6\\
\sq^2\bigl((0)(2)^{2^{e-1}-9}(3)^2(0,2)(1,2)(0,3)\bigr)&\equiv&C_7+C_{10}.
\end{eqnarray*}
Relations 7 though 14 are only relevant for $e\ge4$, and the last one for $e\ge5$. Some relations useful in the analysis are, in our shorthand notation, $\sq^1(0,3)=(3)^2$, $\sq^2(0,2)=(2)^2$, and $\sq^2(1,2)=(0,3)$.

Since the only elements of $\im(\sq^1,\sq^2)$ which involve any $C_i$ involve an even number of $C_i$, we conclude that $\phi(\im(\sq^1,\sq^2))=0$.
\end{proof}

\section{Existence of manifolds, I}\label{ifsec}
We begin this section by presenting a proof of the ``if'' part of Theorem \ref{or}. By Theorem \ref{genlthm}(a), we must show that, for $k$ as in part (i) of Theorem \ref{chithm}, a mod-2 homology class dual to $\chi\sq^{n-k}\io_k$ is the reduction of an integral class.

For $n=4m+b$ with $1\le b\le3$ and $k=\a(m)+b$, similarly to the first part of the proof of part (b) of Theorem \ref{chithm}, $\chi\sq^{n-k}\io_k$ contains the term $\sq(0,\eps_2,\ldots,\eps_r)$, and so $\sq^1\chi\sq^{n-k}\io_k\ne0$. This implies that a dual mod-2 homology class is the reduction of an integral class since the composite
$$H_{n+1}(X;\zt)\mapright{\partial} H_n(X;\Z)\mapright{\rho_2}H_n(X;\zt)$$
is dual to $\sq^1$.

If $n=2^eu$ with $u$ odd and $e\ge2$, and $k=\a(n)+1$, then, by the proof of part (c), $\chi\sq^{n-k}\io_k=(\sq(E')\io_k)^{2^{e-1}}$ where $\exc(E')=k-2$ and the first entry of $E'$ is 0. Let $x=\sq(E')\io_k$.
 In \cite[Theorem 5.5]{Br} or \cite[Theorem 1.3.2]{Cl}, it is shown that for such a class $x$ (even-dimensional primitive with $\sq^1x\ne0$), $d_e(x^{2^{e-1}})\ne0$ for all $e$ in the cohomology Bockstein spectral sequence, and then, by \cite[Theorem 4.7]{Br} or \cite[Theorem 2.4.4]{Cl},  this implies that an  integral homology class dual to $x^{2^{e-1}}$ has order $2^e$. This completes the proof of the ``if'' part of Theorem \ref{or}.

Next we prove  Theorem \ref{spinnonimm} for $n\le15$. Recall from Theorem \ref{genlthm} that we need that $\chi\sq^{n-k}\io_k\not\in\im(\sq^1,\sq^2)$ and a dual class is in the image from $ko_n(K(\zt,k))$.

 The $n\le7$ result can be seen from the fact that elements $\sq(R)$ not in $\im(\sq^1,\sq^2)$ satisfy $|\sq(R)|+\exc(R)\ge8$, so if $\sq(R)\io_k\not\in\im(\sq^1,\sq^2)$, then $n=|\sq(R)|+k\ge8$.

For $n=8$, the smallest possible value of $k$ in Theorem \ref{chithm}(ii) is 2, while
for $9\le n\le15$, it is $k=n-7$. Detailed Adams spectral sequence  (ASS) calculations, discussed below, show that in the ASS converging to $ko_*(K(\zt,k))$,
$\chi\sq^6\io_2$ is a permanent cycle, so yields the desired element in $ko_8(K(\zt,2))$, while for $9\le n\le12$,
$\chi\sq^7\io_{k}$ supports a nonzero $d_2$-differential for $2\le k\le5$, but not for $6\le k\le8$. For $9\le n\le 12$, we next try $\chi\sq^6\io_k$ with $k=n-6$,  and it is clear from Figure \ref{fig2} that there are no possible differentials on this class when $k=3$ ($n=9$) and hence also not for larger values of $k$. Once we have verified these claims,  Theorem \ref{spinnonimm} follows for $n\le 15$.

The $E_2$-term of the ASS converging to $ko_*(K(\zt,k))$ is $\ext_{A_1}(H^*(K(\zt,k);\zt),\zt)$, where $A_1$ is generated by $\sq^1$ and $\sq^2$. For $2\le k\le 6$ and $*\le k+8$, these are shown in Figures \ref{fig1}, \ref{fig2}, and \ref{fig3}. These were obtained by calculating minimal resolutions of $H^*(K(\zt,k);\zt)$ as $A_1$-modules. See, e.g., \cite[pp.~121--125]{W}. The classes involved in the key $d_2$-differentials are circled.

\bigskip

\def\sb{$\ssize\bullet$}
\bigskip
\begin{minipage}{6in}
\begin{fig}\label{fig1}

{\bf $ko_{*+2}(K(\zt,2)) \ \to\ H_{*+2}(K(\zt,2);\Z)$}

\begin{center}

\begin{\tz}[scale=.65]
\node at (0,-.5) {$0$};
\node at (2,-.5) {$2$};
\node at (4,-.5) {$4$};
\node at (6,-.5) {$6$};
\node at (8,-.5) {$8$};
\node at (0,0) {\sb};
\draw (2,1) -- (2,4);
\draw (3,0) -- (3,4);
\node at (1.6,2) {$A$};
\node at (2.6,0) {$B$};
\node at (5.5,2) {$C$};
\node at (6.9,-.4) {$D$};
\node at (4,0) {\sb};
\draw (7.9,4) -- (5.9,2) -- (5.9,4);
\draw (8.1,2) -- (6.1,0) -- (6.1,4);
\draw (8.9,2) -- (6.9,0) -- (6.9,4);
\draw (7.1,0) -- (7.1,4);
\node at (2,1) {\sb};
\node at (2,2) {\sb};
\node at (2,3) {\sb};
\node at ( 3,2) {\sb};
\node at (5.9,3 ) {\sb};
\node at (6.8,2.9 ) {\sb};
\node at ( 6.9,1.1) {\sb};
\node at (7.9,4 ) {\sb};
\node at (7.9,1 ) {\sb};
\node at (8.9,2 ) {\sb};
\node at (6.1,0 ) {\sb};
\node at (7.2,1.1 ) {\sb};
\node at ( 8.1,2) {\sb};
\node at (3,0) {\sb};
\node at (3,1) {\sb};
\node at (15,0) {\sb};
\node at (15,1) {\sb};
\node at (15,2) {\sb};
\node at (15,3 ) {\sb};
\node at (16,0) {\sb};
\node at (16,1 ) {\sb};
\node at (7.9,0) {\sb};
\node at (8.1,0) {\sb};
\draw (6.9,0) circle (3pt);
\draw (5.9,2) circle (3pt);
\node at (11,2) {$\to$};
\node at (13,-.5) {$0$};
\node at (15,-.5) {$2$};
\node at (17,-.5) {$4$};
\node at (13,0) {\sb};
\draw (15,0) -- (15,4);
\draw (15,2) -- (16,0) -- (16,4);
\draw (15,3) -- (16,1);
\node at (17,0) {\sb};
\draw (-1,0) -- (2.2,0);
\draw (3.2,0) -- (6.5,0);
\draw (7.5,0) -- (9,0);
\draw (12.5,0) -- (17.5,0);
\end{\tz}
\end{center}
\end{fig}
\end{minipage}

\bigskip

\def\sb{$\ssize\bullet$}
\bigskip
\begin{minipage}{6in}
\begin{fig}\label{fig2}

{\bf $ko_{*+3}(K(\zt,3))\ \to \ ko_{*+4}(K(\zt,4))$}

\begin{center}

\begin{\tz}[scale=.65]
\node at (0,-.5) {$0$};
\node at (2,-.5) {$2$};
\node at (4,-.5) {$4$};
\node at (6,-.5) {$6$};
\node at (8,-.5) {$8$};
\node at (13,-.5) {$0$};
\node at (15,-.5) {$2$};
\node at (17,-.5) {$4$};
\node at (19,-.5) {$6$};
\node at (21,-.5) {$8$};
\node at (4,0) {\sb};
\node at (4,1) {\sb};
\node at (0,0) {\sb};
\draw (4,0) -- (4,1);
\draw (6,0) -- (7,1);
\draw (7,0) -- (8,1);
\draw (6,2) -- (7,3);
\draw (7,0) circle (3pt);
\draw (6,2) circle (3pt);
\node at (5,0) {\sb};
\node at (13,0) {\sb};
\node at ( 6,0) {\sb};
\node at (7,1 ) {\sb};
\node at (8,1 ) {\sb};
\node at (7,3 ) {\sb};
\node at (11,2) {$\to$};
\draw (17,4) -- (17,0) -- (19,2);
\node at ( 17,0) {\sb};
\node at ( 17,1) {\sb};
\node at (17,2 ) {\sb};
\node at (17.8,.8 ) {\sb};
\draw (18,0) -- (18,4);
\draw (19,2) circle (3pt);
\node at (18.9,0) {\sb};
\node at (19.2,0) {\sb};
\draw (20,0) circle (3pt);
\draw (21,1) -- (21,4);
\draw (20.8,3) -- (20.8,4.5);
\draw (-1,0) -- (9,0);
\draw (12,0) -- (22,0);
\end{\tz}
\end{center}
\end{fig}
\end{minipage}

\bigskip

\def\sb{$\ssize\bullet$}
\bigskip
\begin{minipage}{6in}
\begin{fig}\label{fig3}

{\bf $ko_{*+5}(K(\zt,5))\ \to \ ko_{*+6}(K(\zt,6))$}

\begin{center}

\begin{\tz}[scale=.65]
\node at (0,-.5) {$0$};
\node at (2,-.5) {$2$};
\node at (4,-.5) {$4$};
\node at (6,-.5) {$6$};
\node at (8,-.5) {$8$};
\node at (13,-.5) {$0$};
\node at (15,-.5) {$2$};
\node at (17,-.5) {$4$};
\node at (19,-.5) {$6$};
\node at (21,-.5) {$8$};
\node at (4,0) {\sb};
\node at (5,1) {\sb};
\node at (6,0) {\sb};
\node at (6,1) {\sb};
\node at (0,0) {\sb};
\draw (4,0) -- (6,2)  -- (6,0);
\node at (6.2,0) {\sb};
\node at (7.1,0) {\sb};
\draw (6.9,0) circle (3pt);
\draw (6,2) circle (3pt);
\node at (8,0) {\sb};
\node at (11,2) {$\to$};
\node at (13,0) {\sb};
\node at (17,0) {\sb};
\node at (19,0) {\sb};
\draw (19.9,0) circle (3pt);
\draw (19,1) -- (19,4);
\draw (20.1,0) -- (20.1,4);
\draw (-1,0) -- (9,0);
\draw (12,0) -- (22,0);
\end{\tz}
\end{center}
\end{fig}
\end{minipage}

\bigskip
We establish the differential when $k=2$, and use morphisms of minimal resolutions to see that the circled classes map as indicated as $k$ increases.
For $k=2$, we use the morphism $ko_*(K(\zt,2))\to H_*(K(\zt,2);\Z)$. This is  depicted in Figure \ref{fig1}.  The $d_2$-differential in ASS$(H_*(K(\zt,2);\Z))$ is implied by results of \cite{Br} or \cite{Cl} used earlier. This implies  $d_2(A)=B$ (not pictured) in ASS($ko_*(K(\zt,2))$). We show below that the $d_2$-differential from $C$ to $D$ is implied by the action of the ASS of $bo_*$ on that of $ko_*(K(\zt,2))$.

Let $\tau$ (resp.~$h_0$) denote the element of $E_2(bo)$ corresponding to the filtration-3 generator of $\pi_4(bo)$ (resp.~$2$). Then $\tau\cdot A=h_0^3C$ and $\tau\cdot B=h_0^3D$. This can be seen from the minimal resolutions. Thus
$$h_0^3d_2(D)=d_2(h_0^3D)=d_2(\tau B)=\tau\cdot d_2(B)=\tau\cdot A=h_0^3C,$$
so $d_2(D)=C$.

This determination of $ko_*(K(\zt,2))$ was done, in a similar manner but a somewhat different context, in \cite{W}. Many of our deductions here for other $ko_*(K(\zt,k))$ were also made there, using a different argument.

\section{Existence of manifolds, II.}
In this section, we prove
Theorem \ref{spinnonimm} for $n>15$. Let $K_k=K(\zt,k)$ and $\ext_B(X)=\ext_B(H^*X,\zt)$. Here $B=A_1$ or $E_1$, the latter being the exterior algebra on the Milnor primitives $Q_0=\sq^1$ and $Q_1=\sq(0,1)$. The $E_2$-term of the ASS converging to $ku_*(X)$ is $\ext_{E_1}(X)$, and there is a nice morphism $ko_*(X)\to ku_*(X)$. Theorem \ref{spinnonimm} for $n>15$ follows from Theorem \ref{genlthm}(b), Theorem \ref{chithm}(ii), and the following result, the proof of which requires detailed ASS calculations.
\begin{thm}\label{ASS}
\begin{itemize}
\mbox{}
\item[i.] For $n=16$ and $32$, the element of $\ext_{A_1}^{0,n}(K_2)$ dual to $\chi\sq^{n-2}\io_2$ is a permanent cycle in the ASS converging to $ko_*(K_2)$.
\item[ii.] For $n=17$ (resp.~$33$), the element of $\ext_{E_1}^{0,n}(K_2)$ dual to $\chi\sq^{n-2}\io_2$ supports a nonzero $d_4$ (resp.~$d_8$) differential in the ASS converging to $ku_*(K_2)$.
\item[iii.] The element of $\ext_{A_1}^{0,18}(K_3)$ dual to $\chi\sq^{15}\io_3$ is a permanent cycle in the ASS converging to $ko_*(K_3)$.
\end{itemize}
\end{thm}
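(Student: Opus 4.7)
The plan is to extend the Adams spectral sequence computations begun in Section \ref{ifsec} into the degree ranges needed for $n\le 33$. For parts (i) and (iii), the main tool is a minimal $A_1$-free resolution of $H^*(K_2;\zt)$ through $t\le 34$ and of $H^*(K_3;\zt)$ through $t\le 20$, producing Adams charts directly analogous to those of Figures \ref{fig1}--\ref{fig3}; for part (ii), the corresponding minimal $E_1$-resolution of $H^*(K_2;\zt)$ suffices and is substantially cleaner because $E_1 = \Lambda(Q_0,Q_1)$ has only two generators and the resulting $\ext$ groups form a $v_1$-periodic pattern. In each chart I would locate the filtration-$0$ generator dual to the Milnor-basis summand of $\chi\sq^{n-2}\io_k$ singled out by Proposition \ref{Mil}(vi), then enumerate the higher-filtration classes in topological degree $n-1$, which are the only possible targets of an outgoing differential.

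For part (i), naturality under the morphism $ko_*(K_2)\to H_*(K_2;\Z)$ is the main tool, exactly as in the $n\le 15$ argument at the end of Section \ref{ifsec}. Writing $n=2^e$ (so $e=4$ for $n=16$ and $e=5$ for $n=32$), the Browder--Clark analysis cited there shows that an integral homology class dual to $\chi\sq^{n-2}\io_2$ has order exactly $2^e$; this pins down the $h_0$-tower position into which our ASS class maps. I would then use the $\pi_*(bo)$-action on the ASS, extending the identity $\tau\cdot A = h_0^3 C$ used to deduce $d_2(D)=C$ for $n=8$, together with its higher-Bott analogues, to eliminate each potential differential target in turn.

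For part (ii), the $E_1$-$\ext$ picture reduces the problem to a comparison with the $v_1$-Bockstein pattern of the $ku$-ASS. The expected differential has height $2^{e-1}$ (giving $d_4$ for $n=17$ with $e=4$ and $d_8$ for $n=33$ with $e=5$), reflecting the Bockstein order $2^e$ from the Browder--Clark result above; the target is forced to be the unique filtration-$2^{e-1}$ generator in the adjacent $h_0$-tower. The principal obstacle here, and the step most likely to require explicit hand-calculation, will be certifying that this target has not already been killed by an earlier differential in the $ku$-chart, which requires an explicit pass through the $E_1$-resolution in bidegree $(2^{e-1},\,n+2^{e-1}-1)$.

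Part (iii) is parallel to part (i) but for $k=3$, and I would transport the filtration and permanent-cycle information from the $k=2$ chart via the morphism of minimal resolutions induced by the suspension $\Sigma K_2 \to K_3$. The hardest single step overall is producing the $A_1$-resolution of $H^*(K_3;\zt)$ far enough to see the full $d_r$-pattern in a neighborhood of bidegree $(0,18)$, because the ranks of the resolution grow substantially as $k$ increases and one cannot directly invoke the Browder--Clark classification the way one can for $k=2$; here one must lean on the chart itself, using the $bo$-action to exclude differentials one filtration at a time.
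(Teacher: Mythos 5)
The decisive gap is in part (ii). You propose to obtain the $d_4$ (resp.\ $d_8$) on the class dual to $\chi\sq^{15}\io_2$ (resp.\ $\chi\sq^{31}\io_2$) from the Browder--Cl\'ement Bockstein data, i.e.\ by comparison with integral homology. That comparison cannot detect these differentials: the targets are $v_1^4u_2^4$ and $v_1^8u_2^8$, which die under $ku\to H\Z$ (as do all $v_1$-multiples), and the source classes $x_{17}=u_{17}+u_2u_5^3$ and $x_{33}$ generate $Q_0$-free $E_1$-summands, so they carry no higher integral Bockstein information at all. The integral-homology input only yields the differentials among the towers coming from $P[u_2^2]\otimes(\la 1\ra\oplus N)$ (the $d_{\nu(4i+4)}$'s, e.g.\ $d_2(x_9)=v_1^2u_2^2$). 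The paper needs a genuinely different ingredient to force $d_4(x_{17})=v_1^4u_2^4$: the composite of the $H$-space squaring map $f$ on $CP^\infty$ with $g:CP^\infty\to K_2$ is null, and the $[2]$-series $2x+v_1x^2$ together with \cite[Theorem 3.4]{RW} gives $f_*(\b_8)=v_1^4\b_4+40v_1^3\b_5+\cdots+2^8\b_8$, all of whose terms except the first already vanish in $ku_*(K_2)$; hence $v_1^4u_2^4=0$ in $ku_{16}(K_2)$, and inspection of Figure \ref{figb} shows the only possible cause is the asserted $d_4$. Similarly $j=16$ gives $v_1^8u_2^8=0$, which is accounted for jointly by $d_5(u_2^4x_{17})=h_0v_1^4u_2^8$ and $d_8(x_{33})=v_1^8u_2^8$, so even granting the relation one must sort out the chart rather than appeal to a ``unique filtration-$2^{e-1}$'' target. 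Without some substitute for this squaring-map relation your $d_4$ and $d_8$ are unsupported; a priori the class dual to $\chi\sq^{15}\io_2$ could have been a permanent cycle, which would change Table \ref{T4}.

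There is also a problem in part (iii): transporting permanent-cycle information along $\Sigma K_2\to K_3$ cannot work, because the $K_2$-class you would transport --- the one dual to $\chi\sq^{15}\io_2$ in dimension $17$ --- is precisely the class that supports a differential by part (ii); naturality is used in the paper only in the other direction (to get $\chi\sq^{15}\io_k$ for $4\le k\le 8$ from $k=3$, and $\chi\sq^{14}\io_3$ from $n=16$). Instead the paper computes $\ext_{A_1}(K_3)$ through grading 20 via the submodule decomposition of Table \ref{T6}; the class $A$ dual to $\chi\sq^{15}\io_3$ has exactly one possible target, $B\in\ext_{A_1}^{4,21}(M_9)$, and the differential is excluded because $h_1A=0$ while $h_1B\ne0$. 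Your plan to ``exclude differentials one filtration at a time using the $bo$-action'' must come down to this specific $h_1$-linearity argument (or an equivalent), and as written it does not identify it. For part (i) your plan is workable but heavier than necessary: in the range computed in \cite{W} one has $\ext_{A_1}^{s,t}(K_2)=0$ for $s>0$, $t-s\equiv7\pmod{8}$, $t-s<39$, so there are simply no possible targets in stems 15 and 31 and the permanent-cycle claim is immediate, without the $H\Z$-comparison and $\tau$-action bookkeeping you describe.
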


Part (ii) implies the analogous result for $ko_*$ since the morphism $ko_*X\to ku_*X$ is induced by a morphism of spectral sequences.
That elements dual to $\chi\sq^{14}\io_3$, $\chi\sq^{30}\io_3$, and  $\chi\sq^{15}\io_k$ for $4\le k\le8$ are permanent cycles  follows from parts (i) and (iii) by naturality.

The remainder of the paper is devoted to proving Theorem \ref{ASS}. In \cite[ Section 5]{W}, the second author computed $\ext_{A_1}(K_2)$ through dimension 36. An incorrect deduction was made regarding some differentials in this ASS around dimension 33, but we have verified that its $A_1$-module splitting and determination of associated Ext groups is correct. Although not explicitly noted there, one can read off that $\ext_{A_1}^{s,t}(K_2)=0$ for $s>0$, $t-s\equiv7\pmod8$, $t-s<39$. This is all that is required for our Theorem \ref{ASS}[i.].

For part (ii), we give the complete calculation of the ASS for $ku_*(K_2)$ through dimension 34, except for filtration-0 $\zt$'s corresponding to free $E_1$ summands. In this range, $H^*(K_2;\zt)$ is a polynomial algebra on classes $u_2=\io_2$, $u_3=\sq^1\io$, $u_5=\sq^{2,1}\io$, $u_9=\sq^{4,2,1}\io$, $u_{17}=\sq^{8,4,2,1}\io$, and $u_{33}=\sq^{16,8,4,2,1}\io$.
The $E_1$ action is given in Table \ref{T2}.

\begin{table}[h]
\caption{$E_1$ action on generators of $H^*(K_2)$}
\label{T2}

\begin{tabular}{c|cccccc}
$x$&$u_2$&$u_3$&$u_5$&$u_9$&$u_{17}$&$u_{33}$\\
\hline
$Q_0x$&$u_3$&$0$&$u_3^2$&$u_5^2$&$u_9^2$&$u_{17}^2$\\
$Q_1x$&$u_5$&$u_3^2$&$0$&$u_3^4$&$u_5^4$&$u_9^4$
\end{tabular}
\end{table}

With $P$ (resp.~$E$) denoting a polynomial (resp.~exterior) algebra, in this range the $Q_0$-homology is $P[u_2^2]\otimes E[x_5]$, where $x_5=u_5+u_2u_3$, and $Q_1$-homology is
$$P[u_2^2]\otimes E[x_9,x_{17},u_9^2, u_{17}^2],$$
where $x_9=u_9+u_3^3$ and $x_{17}=u_{17}+u_2u_5^3$. There is an $E_1$-submodule $N$ with a single nonzero element in gradings 5, 7, 8, 9, 10, with generators $x_5$, $x_7=u_2u_5$, and $x_9$, with $Q_0x_7=Q_1x_5$ and $Q_1x_7=Q_0x_9$. It has a $Q_0$-homology class $x_5$, and a $Q_1$-homology class $x_9$. The beginning of the $E_2$-term for $\langle u_2^{2i+2}\rangle\oplus u_2^{2i}N$ is depicted in Figure \ref{figa}.

\bigskip
\begin{minipage}{6in}
\begin{fig}\label{figa}

{\bf $\ext_{E_1}(\langle u_2^{2i+2}\rangle\oplus u_2^{2i}N)$}

\begin{center}

\begin{\tz}[scale=.65]
\node at (2,-.5) {$4i+$};
\node at (10,-.5) {$10$};
\node at (4,-.5) {$4$};
\node at (6,-.5) {$6$};
\node at (8,-.5) {$8$};
\draw (4,0) -- (4,6) ;
\draw [color=red] (5,0) -- (5,6);
\draw (6,1) -- (6,6);
\draw [color=red] (7,0) -- (7,6);
\draw (8,2) -- (8,6);
\draw [color=red] (9,1) -- (9,6);
\draw (10,3) -- (10,6);
\draw [color=red] (11,2) -- (11,6);
\draw (1,0) -- (12,0);
\end{\tz}
\end{center}
\end{fig}
\end{minipage}

\bigskip
Comparison with the results for $H_*(K_2;\Z)$ in \cite{Br} cited early in Section \ref{ifsec} shows that there is a $d_{\nu(4i+4)}$-differential between the first pair of towers in this chart, where $\nu(-)$ denotes the exponent of 2 in an integer. This differential is promulgated in each chart by the action of $v_1\in E_2^{1,3}(bu)$. The $v_1$-periodic classes remaining after removing classes involved in these differentials are, in the range being considered here,  $v_1$-towers on $u_2^4$, $u_2^8$, $h_0u_2^8$, $u_2^{12}$, $u_2^{16}$, $h_0u_2^{16}$, and $h_0^2u_2^{16}$. These will appear as lines of slope 1/2 in Figure \ref{figb}. Here $h_0$ is the Ext element corresponding to multiplication by 2. We are abusing notation here by writing a cohomology class to denote an Ext class dual to it.

The submodules $u_2^{2i+2}$ and $u_2^{2i}N$ account for all of the $Q_0$-homology of $H^*K_2$. Through grading 35, the remaining $Q_1$-homology classes are $$P[u_2^2]\otimes E[x_9]\otimes\langle x_{17},u_9^2, u_{17}^2,u_9^2x_{17}\rangle.$$ Let $x_{33}=u_{33}+u_2u_3u_5^2u_9^2$. There are $Q_0$-free $E_1$-submodules $M_4$ and $M_5$ such that $M_4$ has a single nonzero class in gradings 17 and 18, and $M_5$ in gradings 33, 34, 35, and 36, realizing the $Q_1$-homology classes $x_{17}$, $u_9^2$, $u_{17}^2$, and $u_9^2x_{17}$, and beginning with $x_{17}$ and $x_{33}$, respectively. Then the inclusion of the $E_1$-submodule $$P[u_2^2]\otimes(\langle 1\rangle\oplus N\oplus M_4\oplus(N\otimes M_4)\oplus M_5)$$ into $H^*(K_2)$ induces an isomorphism in $Q_0$- and $Q_1$-homology through dimension 42, and hence an isomorphism in $\ext_{E_1}$ above filtration 0 through roughly the same range. For any $Q_0$-free $E_1$-module $M$, $M\otimes N$ and $x_9M$ have isomorphic $\ext_{E_1}$ in positive filtration. $\ext_{E_1}(M_4)$ is a single $v_1$-tower beginning in grading 17, while $\ext_{E_1}(M_5)$ has $v_1$-towers beginning in 33 and 35, connected by $h_0$.

The initial differential implied by integral homology was $d_2(x_9)=v_1^2u_2^2$. The derivation property of differentials implies that $d_2(u_2^{4i}x_9x_{17})=v_1^2u_2^{4i+2}x_{17}$.
Listing only $v_1$-periodic classes, the elements remaining after the above considerations are depicted in Figure \ref{figb}.

\bigskip
\begin{minipage}{6in}
\begin{fig}\label{figb}

{\bf $v_1$-periodic classes in part of ASS for $ku_*(K_2)$}

\begin{center}

\begin{\tz}[scale=.42]
\node at (8,-1.8) {$8$};
\node at (12,-1.8) {$12$};
\node at (16,-1.8) {$16$};
\node at (20,-1.8) {$20$};
\node at (24,-1.8) {$24$};
\node at (28,-1.8) {$28$};
\node at (32,-1.8) {$32$};
\node at (8,-.7) {$u_2^4$};
\draw (8,0) -- (28,10);
\node at (8,0) {\sb};
\node at (10,1) {\sb};
\node at (12,2) {\sb};
\node at (15.5,-.7) {$u_2^8$};
\draw (16,0) -- (34,9);
\draw (16,1) -- (34,10);
\draw (16,0) -- (16,1);
\draw (18,1) -- (18,2);
\draw (20,2) -- (20,3);
\node at (14,3) {\sb};
\node at (16,4) {\sb};
\node at (16,0) {\sb};
\node at (16,1) {\sb};
\node at (18,1) {\sb};
\node at (18,2) {\sb};
\draw (22,3) -- (22,4);
\node at (17.5,-.7) {$x_{17}$};
\draw (17,0) -- (33,8);
\node at (17,0) {\sb};
\node at (19,1) {\sb};
\node at (23.5,-.7) {$u_2^{12}$};
\draw (24,0) -- (36,6);
\node at (24,0) {\sb};
\node at (26,1) {\sb};
\node at (25.6,-.7) {$u_2^4x_{17}$};
\draw (25,0) -- (35,5);
\node at (25,0) {\sb};
\node at (27,1) {\sb};
\draw (31.8,0) -- (31.8,2);
\node at (31.8,0) {\sb};
\node at (31.8,1) {\sb};
\node at (31.8,2) {\sb};
\node at (34.1,1.1) {\sb};
\node at (34.1,2.1) {\sb};
\node at (34.1,3.1) {\sb};
\draw (34.1,1.05) -- (34.1,3.05);
\draw (36.1,2.05) -- (36.1,4.05);
\draw (31.8,0) -- (36.8,2.5);
\draw (31.8,1) -- (36.8,3.5);
\draw (31.8,2) -- (36.8,4.5);
\node at (30.6,-.2) {$u_2^{16}$};
\node at (32.3,-.7) {$u_2^8x_{17}$};
\draw (32.8,0) -- (37.8,2.5);
\draw (33.3,0) -- (38.3,2.5);
\node at (35,-1.8) {$x_{33}$};
\draw [->] (34.7,-1.4) -- (33.4,-.2);
\draw (35.3,0) -- (35.3,1);
\draw (35.3,0) -- (38.3,1.5);
\node at (33.3,0) {\sb};
\node at (35.3,1) {\sb};
\node at (35.3,0) {\sb};
\node at (32.8,0) {\sb};
\node at (34.8,1) {\sb};
\draw (7,0) -- (37,0);
\end{\tz}
\end{center}
\end{fig}
\end{minipage}

\bigskip
We claim that $d_4(x_{17})=v_1^4u_2^4$. To see this, let $f:CP^\infty\to CP^\infty$ denote the $H$-space squaring map, and $g:CP^\infty\to K_2$ correspond to the nonzero element of $H^2(CP^\infty;\zt)$. The composite $g\circ f$ is trivial, and so $g_*:ku_*(CP^\infty)\to ku_*(K_2)$ sends all elements in $\im(ku_*(CP^\infty)\mapright{f_*}ku_*(CP^\infty))$ to 0. Let $\b_i\in ku_{2i}(CP^\infty)$ be dual to $y^i$, where $y$ generates $ku^2(CP^\infty)$. The $[2]$-series for $ku$ is $2x+v_1x^2$, and it follows from \cite[Theorem 3.4]{RW} that
$f_*(\b_j)$ equals the coefficient of $x^j$ in $\dsum_{i\ge1}\b_i(v_1x^2+2x)^i$. Letting $j=8$, we obtain that the following element maps to 0 in $ku_*(K_2)$:
$$v_1^4\b_4+40v_1^3\b_5+240v_1^2\b_6+448v_1\b_7+2^8\b_8.$$
All classes except the first map to 0 in $ku_*(K_2)$. Since $g_*(\b_4)=u_2^4$, we deduce that $v_1^4u_2^4=0$ in $ku_*(K_2)$. The only way that this can occur is by the asserted $d_4$-differential.
By the derivation property, $d_4(u_2^8x_{17})=v_1^4u_2^{12}$.

Similarly to this, using $CP^\infty$, we obtain that $v_1^8u_2^8=0$ in $ku_*(K_2)$. The only way that this can happen is with $d_5(u_2^4x_{17})=h_0v_1^4u_2^8$ and $d_8(x_{33})=v_1^8u_2^8$. Since the Ext class $x_{33}$ evaluates nontrivially on $\chi\sq^{31}\io_2$, this completes the proof of part (ii) of Theorem \ref{ASS}.

We will determine the $ko$-homology of $K(\zt,3)$ through grading 20, providing more detail than we did in the smaller range of dimensions considered in Section \ref{ifsec}. Through dimension 24, $H^*(K_3;\zt)$ is a polynomial algebra on the generators listed in Table \ref{T5}.
\vfill\eject

\begin{table}[h]
\caption{Generators of $H^*(K_3;\zt)$}
\label{T5}

\begin{tabular}{cc|ccc}
&$x$&$\sq^1x$&$\sq^2x$&$Q_1x$\\
\hline
$g_3$&$\io_3$&$g_4$&$g_5$&$g_6+g_3^2$\\
$g_4$&$\sq^1\io$&$0$&$g_6$&$g_7$\\
$g_5$&$\sq^2\io$&$g_3^2$&$g_7$&$g_4^2$\\
$g_6$&$\sq^{2,1}\io$&$g_7$&$0$&$0$\\
$g_7$&$\sq^{3,1}\io$&$0$&$0$&$0$\\
$g_9$&$\sq^{4,2}\io$&$g_5^2$&$0$&$g_3^4$\\
$g_{10}$&$\sq^{4,2,1}\io$&$g_{11}$&$g_6^2$&$g_{13}$\\
$g_{11}$&$\sq^{5,2,1}\io$&$0$&$g_{13}$&$g_7^2$\\
$g_{13}$&$\sq^{6,3,1}\io$&$g_7^2$&$0$&$0$\\
$g_{17}$&$\sq^{8,4,2}\io$&$g_9^2$&$0$&$g_5^4$\\
$g_{18}$&$\sq^{8,4,2,1}\io$&$g_{19}$&$g_{10}^2$&$g_{21}$\\
$g_{19}$&$\sq^{9,4,2,1}\io$&$0$&$g_{21}$&$g_{11}^2$\\
$g_{21}$&$\sq^{10,5,2,1}\io$&$g_{11}^2$&$0$&$0$
\end{tabular}
\end{table}

\bigskip
From this, one readily determines that through grading 20 the $Q_0$-homology classes are $g_6^2$, $g_{13}'=g_{13}+g_6g_7$, and $g_{10}^2$, while $Q_1$-homology classes are $g_3^2$, $g_5^2$, $g_{11}'=g_{11}+g_4g_7$, $g_3^2g_5^2$, $g_3^2g_{11}'$, $g_9^2$, and $g_{10}^2$. We also let $g_{10}'=g_{10}+g_4g_6$.

In Table \ref{T6}, we list eight $A_1$-submodules $M_i$ whose direct sum carries exactly the $Q_0$- and $Q_1$-homology of $H^*(K_3)$ through grading 20. Thus the inclusion of this sum into $H^*(K_3)$ induces an isomorphism in $\ext^{s,t}_{A_1}$ for $s>0$ in this range. We just list the $A_1$-generators of the modules. In Figures \ref{figc} and \ref{figd} we will depict $\ext_{A_1}(M_i)$. The subscript of $M_i$ is the grading of the bottom class. The chart for the second of each pair of summands appears in red.
For $i=12$ and 13, $x_i$ generates a free $A_1$-submodule but is necessary for inclusion since $\sq^{2,1}x_{i+3}=\sq^{2,2,2}x_i$. Some of the modules can be extended beyond grading 22 by adding higher generators. In Table \ref{T6}, $x_{19}=g_4^2g_5g_6+g_3g_4g_6^2+g_3g_4^4$.
We have included $M_{21}$ because its Ext impacts that of $M_{18}$.

\begin{table}[h]
\caption{Submodules of $H^*(K_3)$}
\label{T6}

\begin{tabular}{c|l|cc}
$i$&$A_1$-generators of $M_i$&$H_*(-;Q_0)$&$H_*(-;Q_1)$\\
\hline
$3$&$g_3$,\ $g_3g_4$&$g_6^2$&$g_3^2$\\
$9$&$g_9$,\ $g_3^2g_5$,\  $g_3g_4^3$,\  $x_{19}$&$0$&$g_5^2$\\
$10$&$g_{10}'$&$g_{13}'$&$g_{11}'$\\
$12$&$g_3g_9$,\  $g_5^3$,\ $g_3^5g_4$&$0$&$g_3^2g_5^2$\\
$13$&$g_3g_{10}'$,\  $g_3^2g_{10}'$,\ $g_3g_4g_{13}'$&$0$&$g_3^2g_{11}'$\\
$17$&$g_{17}$,\  $g_5^2g_9$&$0$&$g_9^2$\\
$18$&$g_{18}$&$g_{10}^2$&$g_{10}^2$\\
$21$&$g_{21}+g_{10}g_{11}$,\  $\sq^{12,6,3,1}\io+g_6^3g_7$&$g_{21}+g_{10}g_{11}$&$0$
\end{tabular}
\end{table}

\bigskip
\begin{minipage}{6in}
\begin{fig}\label{figc}

{\bf Ext-chart for $M_3\oplus M_{10}$ (left), and $M_{18}\oplus M_{21}$}

\begin{center}

\begin{\tz}[scale=.37]
\draw (2,0) -- (19,0);
\node at (3,-.7) {$3$};
\node at (3,0) {\sb};
\node at (7,0) {\sb};
\node at (7,1) {\sb};
\draw (7,0) -- (7,1);
\node at (9,2) {\sb};
\node at (10,3) {\sb};
\node at (12,4) {\sb};
\node at (12,5) {\sb};
\draw [->] (12,4) -- (12,7);
\node at (7,-.7) {$7$};
\node at (10,-.7) {$10$};
\node at (12,-.7) {$12$};
\node at (16,-.7) {$16$};
\node at (26,-.7) {$18$};
\node at (29,-.7) {$21$};
\node at (32,-.7) {$24$};
\node at (16,5) {\sb};
\node at (16,6) {\sb};
\node at (17,6) {\sb};
\node at (18,7) {\sb};
\draw [->] (16,5) -- (16,8);
\draw (16,5) -- (18,7);
\draw (9,2) -- (10,3);
\draw (25,0) -- (35,0);
\draw [color=red] (10,0) -- (11,1);
\node [color=red]  at (10,0) {\sb};
\node [color=red] at (11,1) {\sb};
\node [color=red] at (13,2) {\sb};
\node [color=red] at (13,3) {\sb};
\draw [color=red] [->] (13,2) -- (13,7);
\draw [color=red] (17,3) -- (19,5);
\draw [color=red] [->] (17,3) -- (17,7);
\node [color=red] at (17,3) {\sb};
\node [color=red] at (17,4) {\sb};
\node [color=red] at (18,4) {\sb};
\node [color=red] at (19,5) {\sb};
\draw ((10,0) -- (9,2);
\draw (11,1) -- (10,3);
\draw (13,2) -- (12,4);
\draw (13,3) -- (12,5);
\draw (17,3) -- (16,5);
\draw (17,4) -- (16,6);
\draw (18,4) -- (17,6);
\draw (19,5) -- (18,7);
\node at (26,.65) {$A$};
\node at (26,0) {\sb};
\node at (28,1) {\sb};
\node at (28,2) {\sb};
\draw [->] (28,1) -- (28,6);
\draw [->] (32,2) -- (32,7);
\node at (32,2) {\sb};
\node at (33,3) {\sb};
\node at (34,4) {\sb};
\draw (32,2) -- (34,4);
\node at (32,3) {\sb};
\draw [color=red] [->] (29,0) -- (29,7);
\node [color=red] at (29,0) {\sb};
\node [color=red] at (29,1) {\sb};
\draw [color=red] [->] (33,0) -- (33,7);
\node [color=red] at (33,0) {\sb};
\node [color=red] at (33,1) {\sb};
\draw [color=red] (33,0) -- (35,2);
\node [color=red] at (34,1) {\sb};
\node [color=red] at (35,2) {\sb};
\draw (29,0) -- (28,2);
\draw (29,1) -- (28,3);
\draw (33,0) -- (32,2);
\draw (33,1) -- (32,3);
\draw (34,1) -- (33,3);
\draw (35,2) -- (34,4);
\end{\tz}
\end{center}
\end{fig}
\end{minipage}

The differentials follow as before from the fact (\cite{Br} or \cite{Cl}) that $H_{12}(K_3;\Z)\approx\Z/4\approx H_{20}(K_3;\Z)$.

\bigskip
\begin{minipage}{6in}
\begin{fig}\label{figd}

{\bf Ext-charts for $M_{9}\oplus M_{13}$ (left), and $M_{12}\oplus M_{17}$}
\begin{center}

\begin{\tz}[scale=.37]
\draw (8,0) -- (21,0);
\node at (9,-.7) {$9$};
\node at (11,-.7) {$11$};
\node at (15,-.7) {$15$};
\node at (19,-.7) {$19$};
\node at (9,0) {\sb};
\node at (10,1) {\sb};
\node at (11,2) {\sb};
\node at (11,0) {\sb};
\node at (11,1) {\sb};
\node at (16.4,4) {$B$};
\draw (11,0) -- (11,2);
\draw (9,0) -- (11,2);
\node at (15,0) {\sb};
\node at (15,1) {\sb};
\node at (15,2) {\sb};
\node at (15,3) {\sb};
\draw (15,0) -- (15,3);
\node at (19,0) {\sb};
\node at (19,1) {\sb};
\node at (19,2) {\sb};
\node at (19,3) {\sb};
\node at (19,4) {\sb};
\node at (19,5) {\sb};
\node at (19,6) {\sb};
\node at (18,5) {\sb};
\node at (17,4) {\sb};
\draw (19,0) -- (19,6) -- (17,4);
\node [color=red] at (13,0) {\sb};
\node [color=red] at (16,0) {\sb};
\node [color=red] at (20,0) {\sb};
\node [color=red] at (20,1) {\sb};
\node [color=red] at (20,2) {\sb};
\node [color=red] at (20,3) {\sb};
\node [color=red] at (19,2) {\sb};
\node [color=red] at (18,1) {\sb};
\draw [color=red] (20,0) -- (20,3) -- (18,1);
\draw (26,0) -- (37,0);
\node at (27,-.7) {$12$};
\node at (30,-.7) {$15$};
\node at (33.9,-.7) {$19$};
\node at (27,0) {\sb};
\node at (30,0) {\sb};
\node at (34,0) {\sb};
\node at (34,1) {\sb};
\node at (34,2) {\sb};
\node at (34,3) {\sb};
\node at (33,2) {\sb};
\node at (32,1) {\sb};
\draw (34,0) -- (34,3) -- (32,1);
\node [color=red] at (31.8,0) {\sb};
\node [color=red] at (32.8,1) {\sb};
\node [color=red] at (33.8,2) {\sb};
\node [color=red] at (33.8,1) {\sb};
\node [color=red] at (33.8,0) {\sb};
\draw [color=red] (33.8,0) -- (33.8,2)  -- (31.8,0);
\end{\tz}
\end{center}
\end{fig}
\end{minipage}

\bigskip
The only possible differential on the class $A$ in $\ext_{A_1}^{0,18}(M_{18})$ would be to hit the element $B$ in $\ext_{A_1}^{4,21}(M_9)$. However, since $h_1A=0$ but $h_1B\ne0$ such a differential cannot occur. Thus $g_{18}$, which is the desired class $\chi\sq^{15}\io_3$, is a permanent cycle, as claimed. We expect that $d_3$ is nonzero from most of $M_{13}$ to $M_9$, but this is not required for our conclusion.

\def\line{\rule{.6in}{.6pt}}

\end{document}